\def \wideubar{\underaccent{{\cc@style\underline{\mskip15mu}}}}
\def \widebar{\accentset{{\cc@style\underline{\mskip10mu}}}}
\definecolor{blue}{rgb}{0,0,0.9}
\definecolor{red}{rgb}{0.9,0,0}
\definecolor{green}{rgb}{0,0.9,0}
\definecolor{lightgreen}{rgb}{0.1,0.5,0.1}
\begin{document}

\newtheorem{property}{Property}[section]
\newtheorem{proposition}{Proposition}[section]
\newtheorem{append}{Appendix}[section]
\newtheorem{definition}{Definition}[section]
\newtheorem{lemma}{Lemma}[section]
\newtheorem{corollary}{Corollary}[section]
\newtheorem{theorem}{Theorem}[section]
\newtheorem{remark}{Remark}[section]
\newtheorem{problem}{Problem}[section]
\newtheorem{example}{Example}[section]
\newtheorem{assumption}{Assumption}
\renewcommand*{\theassumption}{\Alph{assumption}}


\title{Convergence Analysis of a Relative-type Inexact Preconditioned Proximal ALM for Convex Nonlinear Programming}

\author{
Lei Yang\thanks{School of Computer Science and Engineering, and Guangdong Province Key Laboratory of Computational Science, Sun Yat-sen University ({\tt yanglei39@mail.sysu.edu.cn}). },~~
Jiayi Zhu\thanks{School of Computer Science and Engineering, Sun Yat-sen University ({\tt zhujy86@mail2.sysu.edu.cn}).}, ~~
Ling Liang\thanks{(Corresponding author) Department of Mathematics, The University of Tennessee, Knoxville ({\tt liang.ling@u.nus.edu}).},~~
Kim-Chuan Toh\thanks{Department of Mathematics, and Institute of Operations Research and Analytics, National University of Singapore ({\tt mattohkc@nus.edu.sg}).}
}


\date{Updated on \today}

\maketitle

\begin{abstract}
This article investigates the convergence properties of a relative-type inexact preconditioned proximal augmented Lagrangian method (rip$^2$ALM) for convex nonlinear programming, a fundamental class of optimization problems with broad applications in science and engineering. Inexact proximal augmented Lagrangian methods have proven to be highly effective for solving such problems, owing to their attractive theoretical properties and strong practical performance. However, the convergence behavior of the relative-type inexact preconditioned variant remains insufficiently understood. This work aims to reduce this gap by rigorously establishing the global convergence of the sequence generated by rip$^2$ALM and proving its asymptotic (super)linear convergence rate under standard assumptions. In addition, we derive the global ergodic convergence rate with respect to both the primal feasibility violation and the primal objective residual, thereby offering a more comprehensive understanding of the overall performance of rip$^2$ALM. These results deepen our theoretical understanding of the family of proximal augmented Lagrangian methods and motivate their development for practical, large-scale structured application problems.

\vspace{5mm}
\noindent {\bf Keywords:} Proximal Augmented Lagrangian Method; Relative-type Error Criterion; Preconditioner; Convex Nonlinear Programming; Global Ergodic Convergence Rate

\vspace{2mm}
\noindent {\bf AMS subject classifications.} 90C05, 90C06, 90C25
\end{abstract}

\section{Introduction}

A fundamental problem in mathematical optimization is to minimize a convex objective subject to linear equality and convex nonlinear inequality constraints, formulated as:
\begin{equation}\label{gen-mainprob}
\begin{aligned}
\min_{\bm{x}\in\mathbb{R}^N} \quad f(\bm{x}), \quad \mathrm{s.t.} \quad A\bm{x} = \bm{b}, ~~ g(\bm{x}) \leq \bm{0},
\end{aligned}
\end{equation}
where $f:\mathbb{R}^{N}\to\mathbb{R}\cup\{+\infty\}$ is a proper closed (possibly nonsmooth) convex function, $A\in \mathbb{R}^{m_{1}\times N}$ and $\bm{b}\in\mathbb{R}^{m_{1}}$ are given data, and $g(\bm{x}) = \big(g_1(\bm{x}), \cdots, g_{m_{2}}(\bm{x})\big)$ with each $g_i:\mathbb{R}^{N}\to\mathbb{R}$ ($i=1,\cdots,m_2$) being a continuously differentiable convex function.

Problem \eqref{gen-mainprob} underlies numerous fundamental models in contemporary application areas such as machine learning, signal/image processing, and data science. For example, the support vector machine can be formulated as a convex quadratic programming problem with linear inequality constraints \cite{boyd2004convex,hearst1998support}. The basis pursuit denoising problem, which is a central problem in compressed sensing, seeks to recover a sparse signal from noisy linear measurements by minimizing the $\ell_1$-norm subject to convex nonlinear inequality constraints \cite{van2009probing}. Regularized logistic and least-squares regression can also be cast as convex constrained problems, where a convex loss function is minimized under suitable regularization constraints \cite{shalev2014understanding,zafar2019fairness}. Therefore, the general convex nonlinear programming problem \eqref{gen-mainprob} not only provides a unifying framework for modeling diverse applications, but also serves as the foundation for developing reliable algorithms. This makes it a cornerstone for both theoretical analysis and algorithmic development in optimization and operations research.

Existing popular approaches for solving problem \eqref{gen-mainprob} can be broadly categorized into three classes: \textit{penalty methods}, \textit{interior-point methods}, and \textit{sequential quadratic programming (SQP) methods}. Penalty methods solve the original constrained problem by addressing a sequence of unconstrained problems, which are obtained by incorporating penalty terms for constraint violations in the objective, using either differentiable \cite{di1985continuously,di1989exact,glad1979multiplier,zavala2014scalable} or nondifferentiable \cite{conn1973constrained,fletcher2000practical,zangwill1967non} penalty functions. Interior-point methods \cite{nesterov1994interior,polik2010interior} replace inequality constraints with barrier functions in the objective, enabling the use of the highly efficient Newton's method on the resulting equality-constrained formulation. They exhibit strong practical performance and serve as the basis for state-of-the-art solvers such as KNITRO \cite{byrd1999interior} and IPOPT \cite{wachter2006implementation}. SQP methods \cite{gill2005snopt,nocedal2006numerical} iteratively solve a sequence of quadratic subproblems, which are obtained by linearizing the nonlinear constraints and approximating the objective via a local quadratic proxy. By leveraging second-order information, SQP methods can achieve high accuracy and rapid local convergence rates. However, they are often computationally expensive and sensitive to problems' conditioning, requiring accurate derivatives and careful globalization strategies to ensure robustness from poor initialization. Due to the extensive literature and numerous variants of these methods, a comprehensive survey is beyond the scope of this work.

Augmented Lagrangian methods (ALMs) \cite{hestenes1969multiplier,r1976augmented}, as an important class of exact penalty methods \cite[Chapter 17]{nocedal2006numerical}, are among the most widely used approaches for constrained optimization, because of their appealing convergence properties and strong practical performance. In recent years, they have been successfully applied to a wide range of large-scale problems, including statistical regression and conic programming, often demonstrating superior efficiency compared with other state-of-the-art alternatives; see, e.g., \cite{lst2018highly,li2018qsdpnal,liang2021inexact,lin2019efficient,zst2010newton-cg} and references therein. Despite these advantages, ALMs may suffer from severe ill-conditioning (primarily due to degeneracy) as well as sensitivity to the updating strategy of the penalty parameter and to the error criteria used for the approximate minimization of the subproblems. On the theoretical side, significant efforts have been devoted to developing relaxed conditions under which ALMs can achieve fast asymptotic convergence, though such conditions are often unverifiable in practice \cite{cui2019r}. On the practical side, since ALM subproblems are rarely solved exactly, inexact variants are essential.

Recently, inexact proximal ALMs, which incorporate proximal terms into ALM subproblems, have attracted growing attention due to their ability to mitigate ill-conditioning of the subproblem and offer stronger convergence guarantees; see \cite{lst2020asymptotically,liang2022qppal,lin2019efficient,ylct2024corrected,zlyt2024ripalm} and references therein. While the convergence properties of the \textit{absolute-type} inexact proximal ALM (pALM) have been well studied \cite{lst2020asymptotically,r1976augmented}, the convergence behavior of its \textit{relative-type} counterpart for solving the general convex nonlinear programming problem \eqref{gen-mainprob} remains insufficiently understood. This article aims to reduce this gap by rigorously establishing the convergence properties of a relative-type inexact preconditioned proximal ALM (rip$^2$ALM) in the general setting. Here, the term ``\textit{absolute-type}" refers to the use of a predetermined tolerance sequence that controls the accuracy required for solving each pALM subproblem. Such a sequence is typically required to be summable and must be carefully chosen to avoid being either overly conservative or excessively aggressive. Consequently, obtaining satisfactory numerical performance often entails delicate parameter tuning, which may incur considerable effort and potential inefficiencies in practical implementations. In contrast, the \textit{relative-type} criterion adopts an adaptive strategy to control the inexactness in subproblem minimization based on computable quantities associated with the iterates of the algorithm. This approach eliminates the need for a summable tolerance sequence, thereby improving both the robustness and practical implementability of the algorithm.

{\bf Contributions.} In this work, we establish and extend the convergence properties of a relative-type inexact proximal ALM, originally proposed in our earlier work \cite{zlyt2024ripalm} for linearly constrained convex optimization, to the general convex nonlinear programming problem \eqref{gen-mainprob}. The proposed algorithmic framework also accommodates a preconditioned proximal term, which further enhances the versatility and flexibility of the method. We prove the global convergence of the sequence generated by rip$^2$ALM under mild assumptions and further establish its asymptotic (super)linear convergence rate under a standard error bound condition, thereby providing a refined characterization of the algorithm's asymptotic behavior. Moreover, we derive a novel global ergodic convergence rate with respect to both the primal feasibility violation and the primal objective residual. These results not only confirm the robustness of rip$^2$ALM but also deepen our understanding of the underlying trade-offs among efficiency, computational cost, and inexactness in subproblem minimization. Collectively, these findings enrich the convergence theory of rip$^2$ALM and provide practical guidance for its application to large-scale convex optimization problems. Finally, we remark that while our results are established for the constraints $-g(\bm{x})\in \mathbb{R}^{m_2}_+$, with suitable adaptation, they are also applicable to more general conic constraints of the form $-g(\bm{x})\in \mathcal{K}$, where $\mathcal{K}$ is a proper closed convex cone.


The remainder of this article is organized as follows. Section \ref{sec:ripalm} introduces the proposed rip$^2$ALM, followed by a comprehensive convergence analysis in Section \ref{section:analysis}, where we establish its global convergence, asymptotic (super)linear convergence rate, and global ergodic convergence rate under suitable assumptions. Section \ref{sec-conclusion} concludes the paper with a summary of the main contributions and a discussion of potential directions for future research. All technical proofs are deferred to the appendices.

\vspace{2mm}
\textbf{Notation.}
We use $\mathbb{R}^n$ ($\mathbb{R}_{+}^n$) and $\mathbb{R}^{m \times n}$ ($\mathbb{R}_{+}^{m \times n}$) to denote the sets of $n$-dimensional real (non-negative) vectors and $m \times n$ real (non-negative) matrices, respectively. For a vector $\bm{x}\in\mathbb{R}^n$,
$\|\bm{x}\|$ denotes its Euclidean norm and $\|\bm{x}\|_H:=\sqrt{\langle\bm{x},\, H\bm{x}\rangle}$ denotes its weighted norm associated with a symmetric positive definite matrix $H\in\mathbb{R}^{n\times n}$. 
A \emph{multifunction} (also known as a set-valued mapping) is a generalization of the notion of a function in which each input may correspond to a set of outputs rather than a single value. Formally, a multifunction $F$ from a vector space $\mathbb{R}^n$ to the power set $2^{\mathbb{R}^m}$ is written as $F:\mathbb{R}^n \rightrightarrows \mathbb{R}^m$, where for each $\bm{x}\in\mathbb{R}^n$, the image $F(\bm{x})$ is a subset of $\mathbb{R}^m$.
For an extended-real-valued function $f: \mathbb{R}^{n} \rightarrow [-\infty,\infty]$, we say that it is \textit{proper} if $f(\bm{x}) > -\infty$ for all $\bm{x}\in\mathbb{R}^{n}$ and its effective domain ${\rm dom}\,f:=\{\bm{x} \in \mathbb{R}^{n} : f(\bm{x})<\infty\}$ is nonempty. A proper function $f$ is said to be closed if it is lower semicontinuous. For a proper closed convex function $f: \mathbb{R}^{n} \rightarrow (-\infty,\infty]$, its subdifferential at $\bm{x}\in{\rm dom}\,f$ (which is a multifunction) is defined by $\partial f(\bm{x}):=\big\{\bm{d}\in\mathbb{R}^n: f(\bm{y}) \geq f(\bm{x}) + \langle \bm{d}, \,\bm{y}-\bm{x}\rangle, ~\forall\,\bm{y}\in\mathbb{R}^n\big\}$. 
Let $\mathcal{C}$ be a closed convex subset of $\mathbb{R}^n$. For any $\bm{x}\in\mathbb{R}^n$ and any symmetric positive definite matrix $H$, we define the weighted distance from $\bm{x}$ to $\mathcal{C}$ as $\mathrm{dist}_{H}(\bm{x},\mathcal{C}):=\inf_{\bm{y}\in\mathcal{C}}\|\bm{x}-\bm{y}\|_H$, and the corresponding weighted projection of $\bm{x}$ onto $\mathcal{C}$ as $\Pi_{\mathcal{C},H}(\bm{x}):=\arg\min_{\bm{y}\in\mathcal{C}}\|\bm{x}-\bm{y}\|_H$. When $H$ is the identity matrix, we omit $H$ from the notation and simply write $\mathrm{dist}(\bm{x},\mathcal{C})$ and $\Pi_{\mathcal{C}}(\bm{x})$ for the Euclidean distance to $\mathcal{C}$ and the Euclidean projection onto $\mathcal{C}$, respectively.



\section{A relative-type inexact preconditioned proximal ALM} \label{sec:ripalm}

In this section, we present a relative-type inexact preconditioned proximal ALM (rip$^2$ALM)
for solving the general convex nonlinear programming problem \eqref{gen-mainprob}. To this end, we first follow \cite[Section 2]{es2013practical} to review the parametric convex duality framework from \cite{r1970convex,r1974conjugate} and \cite[Chapter 11]{rw1998variational}. We identify problem \eqref{gen-mainprob} with the following problem
\begin{equation}\label{gen-paraobjpri}
\min_{\bm{x}\in\mathbb{R}^{N}} \quad F(\bm{x},\bm{0}),
\end{equation}
where $F:\mathbb{R}^{N}\times\mathbb{R}^{M}\to(-\infty,+\infty]$ is defined by
\begin{equation}\label{gen-para-F}
F(\bm{x},\bm{u}) =
\left\{\begin{aligned}
& f(\bm{x}), & & \text{if}~~A\bm{x}-\bm{b}+\bm{r}=\bm{0} ~~\text{and}~~g(\bm{x}) + \bm{s} \leq \bm{0}, \\
& +\infty, & & \text{otherwise},
\end{aligned}\right.
\end{equation}
with $\bm{u}:=(\bm{r},\bm{s})\in\mathbb{R}^M$, $\bm{r}\in\mathbb{R}^{m_1}$, $\bm{s}\in\mathbb{R}^{m_2}$, and $M = m_1 + m_2$. Here, the second argument of $F$ represents some kind of perturbation to the primal problem \eqref{gen-mainprob}. 
Then, the (ordinary) Lagrangian function of problem \eqref{gen-mainprob} can be defined by taking the concave conjugate of $F$ with respect to its second argument, that is,
\begin{equation}\label{gen-para-lag}
\begin{aligned}
\ell(\bm{x}, (\bm{\lambda},\bm{\mu}))
:=& \inf_{\bm{u} \in \mathbb{R}^{M}}\big\{F(\bm{x}, \bm{u})-\langle \bm{u}, \,(\bm{\lambda},\bm{\mu})\rangle\big\}   \\
=&\begin{cases} f(\bm{x})+\langle\bm{\lambda}, \,A\bm{x}-\bm{b}\rangle + \langle\bm{\mu}, \,g(\bm{x})\rangle, & \bm{\mu} \geq \bm{0}, \\
-\infty, & \text{otherwise},
\end{cases}
\end{aligned}
\end{equation}
where $\bm{\lambda} \in \mathbb{R}^{m_1}$ and $\bm{\mu} \in \mathbb{R}^{m_2}$ are Lagrange multipliers. Clearly, $\ell: \mathbb{R}^{N} \times \mathbb{R}^{M} \rightarrow[-\infty, \infty]$ is convex in its first argument and concave in the second argument.

For a given penalty parameter $\sigma > 0$, the augmented Lagrangian function of problem \eqref{gen-mainprob} is defined as follows (see \cite[Example 11.57]{rw1998variational}): for any $(\bm{x},\bm{\lambda},\bm{\mu})\in\mathbb{R}^{N}\times\mathbb{R}^{m_{1}}\times\mathbb{R}_{+}^{m_{2}}$,
\begin{equation}\label{gen-auglag}
\begin{aligned}
& \mathcal{L}_{\sigma}(\bm{x},\bm{\lambda},\bm{\mu}) \\
:= & \;\sup\limits_{\bm{\xi}_{1}\in\mathbb{R}^{m_1},\,\bm{\xi}_{2}\in\mathbb{R}^{m_2}} ~ \left\{ \ell(\bm{x},(\bm{\xi}_1,\bm{\xi}_2)) - \frac{1}{2\sigma}\|\bm{\lambda}-\bm{\xi}_1\|^2 - \frac{1}{2\sigma}\|\bm{\mu}-\bm{\xi}_2\|^2\right\}\\
= &\; f(\bm{x}) + \langle\bm{\lambda}, A\bm{x}-\bm{b}\rangle + \frac{\sigma}{2}\|A\bm{x}-\bm{b}\|^{2}
+ \frac{1}{2\sigma}\big\|\max\{\bm{0},\bm{\mu}+\sigma g(\bm{x})\}\big\|^{2}-\frac{1}{2\sigma}\|\bm{\mu}\|^{2}.
\end{aligned}
\end{equation}

With the above preparations, we are now ready to present rip$^2$ALM for solving problem \eqref{gen-mainprob} in Algorithm \ref{algo:rip2ALM}. We next highlight several potential advantages of rip$^2$ALM from both theoretical and numerical perspectives in the following paragraphs.

\begin{algorithm}[ht]
\caption{The rip$^2$ALM for solving problem \eqref{gen-mainprob}}\label{algo:rip2ALM}
\textbf{Input:} $\rho\in[0,1)$, $\{\sigma_{k}\}_{k=0}^{\infty}\subset\mathbb{R}_{++}$ with $\inf_{k\geq0}\{\sigma_{k}\}>0$, $\{\tau_{k}\}_{k=0}^{\infty}\subset\mathbb{R}_{++}$ with $\inf_{k\geq0}\{\tau_{k}\}>0$,
and $S\in\mathbb{R}^{N\times N}$ is a symmetric positive definite matrix. Choose $\bm{w}^0, \,\bm{x}^0\in\mathbb{R}^N$, $\bm{\lambda}^0\in\mathbb{R}^{m_1}$, $\bm{\mu}^0\in\mathbb{R}_{+}^{m_2}$ arbitrarily. Set $k=0$.  \vspace{1mm} \\
\textbf{while} the termination criterion is not met, \textbf{do} \vspace{-2mm}
\begin{itemize}[leftmargin=2.2cm]
\item[\textbf{Step 1}.] Approximately solve the subproblem
		\begin{equation}\label{rip2ALM-subpro}
			\min\limits_{\bm{x}\in\mathbb{R}^{N}}~~
			\mathcal{L}_{\sigma_{k}}(\bm{x},\,\bm{\lambda}^k,\,\bm{\mu}^k)
			+ \frac{\tau_k}{2\sigma_{k}}\big\|\bm{x}-\bm{x}^k\big\|^2_{S}
		\end{equation}
		to find a point $\bm{x}^{k+1}$ and its associated error $\Delta^{k+1}$
		such that
		\begin{equation}\label{rip2ALM-inexcond}
			\Delta^{k+1}\in
			\partial_{x}\mathcal{L}_{\sigma_{k}}(\bm{x}^{k+1},\,\bm{\lambda}^k,\,\bm{\mu}^k) + \tau_k\sigma_{k}^{-1}S\big(\bm{x}^{k+1}-\bm{x}^{k}\big),
		\end{equation}
		satisfying the following relative-type error criterion
		\begin{equation}\label{rip2ALM-stopcrit}
			\hspace{-6mm}
			\begin{aligned}
				&\; 2\big|\langle\bm{w}^k-\bm{x}^{k+1},
				\,\sigma_{k}\Delta^{k+1}\rangle\big|
				+ \big\|\sigma_{k}\Delta^{k+1}\big\|^2 \\[3pt]
				\leq&\; \rho\left(
				\big\|\sigma_{k}\big(A\bm{x}^{k+1}-\bm{b}\big)\big\|^2
				+ \big\|\min\big\{\bm{\mu}^{k},-\sigma_{k}g(\bm{x}^{k+1})\big\}\big\|^2
				+ \tau_k\big\|\bm{x}^{k+1}-\bm{x}^k\big\|^2_{S}\right).
			\end{aligned}
		\end{equation}
		
\item[\textbf{Step 2}.] Update
		\begin{equation}\label{rip2ALM_ywupdate}
			\begin{aligned}
				\bm{\lambda}^{k+1} &= \bm{\lambda}^k +
				\sigma_{k}\big(A\bm{x}^{k+1}-\bm{b}\big), \\
				\bm{\mu}^{k+1} &= \max\big\{\bm{0},\,\bm{\mu}^{k}+\sigma_{k}g(\bm{x}^{k+1})\big\}, \\
				\bm{w}^{k+1} &= \bm{w}^k - \sigma_{k}\Delta^{k+1}.
			\end{aligned}
		\end{equation}
		
\item[\textbf{Step 3}.] Set $k=k+1$ and go to \textbf{Step 1}.  \vspace{-2mm}
\end{itemize}
\textbf{end while}  \\
\textbf{Output:} $(\bm{x}^k,\bm{\lambda}^k,\bm{\mu}^k)$ \vspace{0.5mm}
\end{algorithm}

First, unlike the classical ALM, whose subproblem objective is typically only convex (and hence requires restrictive conditions to ensure strong convexity), the inclusion of the preconditioned proximal term $\frac{\tau_k}{2\sigma_k}\| \bm{x}-\bm{x}^k\|^2_{S}$ renders the objective function in \eqref{rip2ALM-subpro} strongly convex. This guarantees the uniqueness of each subproblem's solution. Moreover, the strong convexity facilitates the direct application of efficient algorithms, such as (accelerated) proximal gradient methods \cite{beck2009fast,nesterov1983method} and semismooth Newton methods \cite{qi1993nonsmooth}, to solve the subproblems with both strong theoretical guarantees and robust practical performance, without imposing additional assumptions that are often unverifiable in practice.

Second, another advantage of rip$^2$ALM lies in its convergence guarantees (see the next section), which are achieved \textit{without} introducing any additional correction step. This contrasts with the relative-type inexact variant of ALM developed in \cite{ylct2024corrected}, where an extra correction step was incorporated to establish a link between the resulting algorithm and a relative-type inexact proximal point algorithm applied to the primal-dual solution mapping. Although such a design ensures theoretical convergence, numerical results in our earlier work \cite{zlyt2024ripalm} show that this correction step not only incurs additional computational overhead, but may also degrade the practical performance of the proximal ALM (pALM) and, in some cases, even induce numerical instability. By carefully designing the relative-type error criterion \eqref{rip2ALM-stopcrit} to operate directly within the vanilla preconditioned pALM framework, the proposed rip$^2$ALM in Algorithm \ref{algo:rip2ALM} avoids the need for such a correction step, resulting in a simpler algorithmic framework that exhibits enhanced robustness and improved efficiency in practice.

When compared with the absolute-type inexact pALM studied in \cite{lst2020asymptotically,r1976augmented}, the proposed rip$^2$ALM is more user-friendly in terms of tolerance parameter tuning. The absolute-type framework requires pre-specifying a summable sequence of infinitely many tolerance parameters to control the accuracy of subproblems 
minimization, which may entail considerable effort for selecting an
appropriate sequence to avoid excessive cost in solving the subproblems. 
In contrast, rip$^2$ALM involves only a \textit{single} tolerance parameter $\rho\in[0,1)$ in \eqref{rip2ALM-stopcrit}, which can be efficiently selected via a simple one-dimensional grid search. This greatly enhances its ease of implementation and reliability in practice. Finally, we refer the reader to our earlier work \cite[Section 5.1]{zlyt2024ripalm} for detailed numerical comparisons among different inexact pALM variants.

We conclude this section by emphasizing the role of the preconditioner $S$. When $S=I$, the resulting method reduces to the classical proximal ALM framework with a relative-type error criterion. In large-scale implementations, however, it is often advantageous to construct $S$ by exploiting problem structure so that it can improve the conditioning and cost of solving the inner subproblem. As an illustration, consider the following standard dual linear program problem:
\begin{equation*}
\min_{\bm{y}\in\mathbb{R}^m}\ \bm{b}^\top\bm{y}
\quad \mathrm{s.t.}\quad \bm{c} - A^\top\bm{y} \ge \bm{0},
\end{equation*}
where $\bm{b}\in \mathbb{R}^m$, $\bm{c}\in\mathbb{R}^n$, and $A\in \mathbb{R}^{m\times n}$ has full row rank. One can derive the (reduced) augmented Lagrangian as
\begin{equation*}
{\mathcal{L}}_\sigma(\bm{y};\bm{x})
= \bm{b}^\top\bm{y}
+ \frac{\sigma}{2}\left\|\Pi_+\left(A^{\top}\bm{y}-\bm{c} 
+ \bm{x}/\sigma\right)\right\|^2
- \frac{1}{2\sigma}\|\bm{x}\|^2,
\end{equation*}
where $\sigma>0$ is the penalty parameter, $\bm x$ is the primal variable, and $\Pi_+(\cdot)$ denotes the projection operator onto $\mathbb{R}^n_+$. Accordingly, a proximal ALM step in Algorithm \ref{algo:rip2ALM} computes $\bm y^{k+1}$ by (approximately) minimizing the function $\phi_k(\bm{y}):={\mathcal{L}}_{\sigma_k}(\bm{y};\bm{x}^k)+\tfrac{\tau_k}{2\sigma_k}\|\bm{y}-\bm{y}^k\|_{S}^2$.

A highly efficient method for minimizing the function $\phi_k$  is the semismooth Newton method, which exploits the strong semismoothness of the projection operator $\Pi_+$; interested readers are referred to \cite{lst2020asymptotically} for its excellent convergence properties and practical performance. Each semismooth Newton step requires solving an $m\times m$ symmetric positive definite linear system of the form
\begin{equation*}
\left(\tfrac{\tau_k}{\sigma_k}S + \sigma_k A D_k A^\top\right)\bm{d} 
= -\nabla \phi_k(\bm{y}),
\end{equation*}
where $D_k$ is a diagonal matrix with entries in $\{0,1\}$ determined by the active set of the projection. For large-scale problems, forming or factorizing $\left(\tfrac{\tau_k}{\sigma_k}S + \sigma_k A D_k A^\top\right)\in \mathbb{S}_{++}^m$ repeatedly can be expensive. To make the algorithm scalable and robust, a natural structure-driven choice is $S=AA^\top$ (which is positive definite because the matrix $A$ has full row rank). Using the preconditioned conjugate gradient (PCG) method with $S$ as the preconditioner for solving the above linear system avoids the explicit formation of $A D_k A^\top$ and yields a better uniformly controlled conditioning. Indeed, one can verify that
\begin{equation*}
\mathrm{cond}\!\left((AA^\top)^{-1}\Big(\tfrac{\tau_k}{\sigma_k}AA^\top+\sigma_k A D_k A^\top\Big)\right)
= \mathrm{cond}\!\left(\tfrac{\tau_k}{\sigma_k}I+\sigma_k\,(AA^\top)^{-1/2}A D_k A^\top (AA^\top)^{-1/2}\right).
\end{equation*}
Since $0\preceq D_k\preceq I$ implies that $\lambda_{\max}((AA^\top)^{-1/2}A D_k A^\top (AA^\top)^{-1/2})\le 1$, one obtains that 
\begin{equation*}
\mathrm{cond}\!\left((AA^\top)^{-1}\Big(\tfrac{\tau_k}{\sigma_k}AA^\top+\sigma_k A D_k A^\top\Big)\right) \leq  1+\frac{\sigma_k^2}{\tau_k}. 
\end{equation*}
In the above, $\mathrm{cond}(H)$ denotes the condition number of a symmetric positive definite matrix $H$, defined as the ratio of the largest and smallest eigenvalues of $H$. On the other hand, one can verify that 
\begin{equation*}
\mathrm{cond}\!\left(\tfrac{\tau_k}{\sigma_k}I+\sigma_k A D_k A^\top\right) \leq \frac{\frac{\tau_k}{\sigma_k} + \sigma_k\lambda_{\rm max}(AA^\top)}{\frac{\tau_k}{\sigma_k}} = 1 + \frac{\sigma_k^2}{\tau_k} \lambda_{\rm max}(AA^\top),
\end{equation*}
which can be much larger than the previous bound when $\lambda_{\rm max}(AA^\top)$ is large. This example highlights that while $S=I$ provides a clean baseline, a carefully chosen preconditioner such as $S=AA^\top$ can potentially reduce the conditioning and hence the cost of the inner Newton/PCG solves, thereby improving the practical performance of proximal ALM in the large-scale setting.

\section{Convergence analysis}\label{section:analysis}

In this section, we conduct a comprehensive analysis of the convergence properties of rip$^2$ALM in Algorithm~\ref{algo:rip2ALM}. To set the stage, we begin by recalling necessary definitions and preliminaries (more details can be found in \cite[Section 2]{es2013practical}). First, recall the definition of the function $F$ from \eqref{gen-para-F}, and define its concave conjugate $G:\mathbb{R}^N \times \mathbb{R}^M \to [-\infty,+\infty)$. Specifically, the function $G$ is defined as
\begin{equation*}
G(\bm{p},\bm{y}) := \inf_{\bm{x} \in \mathbb{R}^N,\, \bm{u} \in \mathbb{R}^M}
\Big\{F(\bm{x}, \bm{u}) - \langle \bm{x}, \bm{p}\rangle - \langle \bm{u}, \bm{y}\rangle\Big\}.
\end{equation*}
By definition, $G$ is a closed (i.e., upper semicontinuous) concave function.
Then, the dual problem of \eqref{gen-paraobjpri} is given by
\begin{equation}\label{gen-paraobjdual}
\max_{\bm{y} \in \mathbb{R}^M} \; G(\bm{0}, \bm{y}).
\end{equation}

\medskip
\noindent\textbf{Subdifferentials.}
In order to establish a precise connection between the primal problem \eqref{gen-paraobjpri}, dual problem \eqref{gen-paraobjdual}, and Lagrangian function \eqref{gen-para-lag}, we introduce the subdifferential mappings of the relevant functions. Let $\partial F: \mathbb{R}^N \times \mathbb{R}^M \rightrightarrows \mathbb{R}^N \times \mathbb{R}^M$ and $\partial G: \mathbb{R}^N \times \mathbb{R}^M \rightrightarrows \mathbb{R}^N \times \mathbb{R}^M$ denote the subdifferentials of $F$ and $G$, respectively. Concretely, these mappings are defined by
\begin{align*}
(\bm{p}, \bm{y}) \in \partial F(\bm{x},\bm{u})
&\;\;\Leftrightarrow\;\; F(\bm{x}', \bm{u}') \geq F(\bm{x}, \bm{u})
+ \langle \bm{p}, \bm{x}' - \bm{x}\rangle
+ \langle \bm{y}, \bm{u}' - \bm{u}\rangle,
\quad \forall\,(\bm{x}',\bm{u}'), \\[0.3em]
(\bm{x}, \bm{u}) \in \partial G(\bm{p},\bm{y})
&\;\;\Leftrightarrow\;\; G(\bm{p}', \bm{y}') \leq G(\bm{p}, \bm{y})
- \langle \bm{x}, \bm{p}' - \bm{p}\rangle
- \langle \bm{u}, \bm{y}' - \bm{y}\rangle,
\quad \forall\,(\bm{p}',\bm{y}').
\end{align*}
Next, recalling the definition of the Lagrangian function $\ell$ from \eqref{gen-para-lag}, we introduce its subdifferential mapping $\partial \ell: \mathbb{R}^N \times \mathbb{R}^M \rightrightarrows \mathbb{R}^N \times \mathbb{R}^M$, defined by
\begin{equation*}
(\bm{p}, \bm{u}) \in \partial \ell(\bm{x}, \bm{y}) \;\;\Leftrightarrow\;\;
\begin{cases}
\ell(\bm{x}', \bm{y}) \geq \ell(\bm{x}, \bm{y}) + \langle \bm{p}, \bm{x}'-\bm{x}\rangle, & \forall\,\bm{x}' \in \mathbb{R}^N, \\[0.3em]
\ell(\bm{x}, \bm{y}') \leq \ell(\bm{x}, \bm{y}) - \langle \bm{u}, \bm{y}'-\bm{y}\rangle, & \forall\,\bm{y}' \in \mathbb{R}^M.
\end{cases}
\end{equation*}
For clarity in our context, we partition the dual vector $\bm{y}$ as $\bm{y}=(\bm{\lambda}, \bm{\mu}) \in \mathbb{R}^M$, where $\bm{\lambda} \in \mathbb{R}^{m_1}$ and $\bm{\mu} \in \mathbb{R}^{m_2}$. Then, when $\bm{\mu}\geq 0$, the above definition yields
\begin{equation}\label{Lagform}
\partial \ell(\bm{x}, (\bm{\lambda}, \bm{\mu})) =
\big\{ \partial f(\bm{x}) + A^{\top}\bm{\lambda} + \nabla g(\bm{x})^{\top}\bm{\mu} \big\}
\times\Big( \{\bm{b}-A\bm{x}\}
\times \big\{-g(\bm{x}) + \mathcal{N}_{\mathbb{R}_+^{m_2}}(\bm{\mu})\big\}\Big),
\end{equation}
where $\nabla g(\bm{x})$ is the Jacobian of $g$ at $\bm{x}$, and $\mathcal{N}_{\mathbb{R}_+^{m_2}}$ denotes the normal cone to the nonnegative orthant $\mathbb{R}_+^{m_2}$.

\medskip
\noindent\textbf{Relationship between relevant subdifferentials.}
It is clear that the three set-valued mappings $\partial F$, $\partial G$, and $\partial \ell$ are all maximal monotone operators.
Moreover, they are closely related through the equivalence (see also \cite[Equation~(23)]{es2013practical})
\begin{equation}\label{gen-subdiffrelation}
(\bm{p}, \bm{y}) \in \partial F(\bm{x}, \bm{u})
\;\;\Leftrightarrow\;\;
(\bm{p}, \bm{u}) \in \partial \ell(\bm{x}, \bm{y})
\;\;\Leftrightarrow\;\;
(\bm{x}, \bm{u}) \in \partial G(\bm{p}, \bm{y}).
\end{equation}
Intuitively, $\partial F$ and $\partial G$ are inverses of one another, while $\partial \ell$ can be viewed as a partial inverse of both.

\medskip
\noindent\textbf{Saddle points and strong duality.}
We now arrive at a fundamental conclusion, which is the optimality condition for the primal and dual problems: if $(\bm{x}^*, \bm{y}^*) \in \mathbb{R}^N \times \mathbb{R}^M$ satisfies
\begin{equation*}
(\bm{0}, \bm{0}) \in \partial \ell(\bm{x}^*, \bm{y}^*),
\end{equation*}
then $\bm{x}^*$ solves the primal problem \eqref{gen-paraobjpri} and $\bm{y}^*$ solves the dual problem \eqref{gen-paraobjdual}. In this case, we call $(\bm{x}^*, \bm{y}^*)$ a \emph{saddle point} of the Lagrangian function $\ell$. The existence of a saddle point immediately implies \emph{strong duality}, namely,
\begin{equation*}
F(\bm{x}^*, \bm{0}) = G(\bm{0}, \bm{y}^*),
\end{equation*}
so that the optimal values of the primal problem \eqref{gen-paraobjpri} and the dual problem \eqref{gen-paraobjdual} are well-defined and coincide, i.e., there is no duality gap. In view of this, the set of saddle points can be expressed in product form as $\mathcal{X}^* \times \mathcal{Y}^* \subset \mathbb{R}^N \times \mathbb{R}^M$, where $\mathcal{X}^*$ denotes the solution set of the primal problem \eqref{gen-paraobjpri} (equivalently, problem \eqref{gen-mainprob}), and $\mathcal{Y}^*$ denotes the solution set of the dual problem \eqref{gen-paraobjdual}.

\subsection{Global convergence}

With the above preparations, we are now ready to establish the convergence of the proposed rip$^2$ALM in Algorithm \ref{algo:rip2ALM}.

\begin{theorem}\label{thm:convergence-gen}
Let the functions $F$, $G$ and $\ell$ be defined as in \eqref{gen-paraobjpri}, \eqref{gen-paraobjdual} and \eqref{gen-para-lag}, respectively. Let $\rho\in[0,1)$, $S\in\mathbb{R}^{N\times N}$ be a symmetric positive definite matrix, $\{\sigma_{k}\}$ be a positive sequence satisfying that $\sigma_k\geq\sigma_{\min}>0$ for all $k\geq0$, and $\{\tau_k\}$ be a positive sequence satisfying that
\begin{equation*}
\tau_{k}\geq\tau_{\min}>0, \quad \tau_{k+1}\leq(1+\nu_{k})\tau_{k} \quad
\mbox{with} \quad \nu_{k}\geq0 ~~\mbox{and}~~ {\textstyle\sum_{k=0}^{\infty}}\nu_{k} < +\infty.
\end{equation*}
Let $\{\bm{x}^{k}\}$, $\{\Delta^{k}\}$, $\{\bm{w}^{k}\}\subset\mathbb{R}^{N}$ and $\{\bm{y}^{k}:=(\bm{\lambda}^k,\bm{\mu}^k)\}\subset\mathbb{R}^{M}$ be sequences generated by the rip$^2$ALM in Algorithm \ref{algo:rip2ALM}. If $\ell$ admits a saddle point (i.e., $(\partial\ell)^{-1}(\bm{0},\bm{0})\neq\emptyset$), then the following statements hold.
\begin{enumerate}[label=(\roman*), left=-5pt]
\item The sequences $\{\bm{x}^k\}$, $\{\bm{w}^k\}$ and $\{\bm{y}^k\}$ are all bounded.
		
\item $\lim\limits_{{k}\to\infty}\Delta^{k+1}=\bm{0}$, $\lim\limits_{{k}\to\infty}\bm{p}^{k+1}=\bm{0}$ and $\lim\limits_{{k}\to\infty}\bm{u}^{k+1}=\bm{0}$, where $\bm{p}^{k+1}$ and $\bm{u}^{k+1}$ are defined by
\begin{equation*}
\bm{p}^{k+1}:=\Delta^{k+1}
- \tau_k\sigma_{k}^{-1}S(\bm{x}^{k+1}
- \bm{x}^{k})\quad\text{and}\quad
\bm{u}^{k+1}:=\sigma_{k}^{-1}(\bm{y}^{k}-\bm{y}^{k+1}), \quad \forall\,k\geq0.
\end{equation*}
		
\item Both the sequences $\left\{F(\bm{x}^{k+1},\,\bm{u}^{k+1})\right\}$ and $\left\{G(\bm{p}^{k+1},\bm{y}^{k+1})\right\}$ converge to the common optimal value of problems \eqref{gen-paraobjpri} and \eqref{gen-paraobjdual}.
		
\item Any accumulation point of $\{\bm{x}^k\}$ is an optimal solution of primal problem \eqref{gen-paraobjpri} (i.e., problem \eqref{gen-mainprob}), and any accumulation point of $\{\bm{y}^k\}$ is an optimal solution of dual problem \eqref{gen-paraobjdual}.
		
\item The sequence $\{\bm{y}^k\}$ converges to an optimal solution of dual problem \eqref{gen-paraobjdual}.
\end{enumerate}
\end{theorem}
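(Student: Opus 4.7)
The plan is to upgrade (iv) to a convergence statement for the whole sequence $\{\bm{y}^k\}$ by establishing a quasi-Fej\'er monotonicity property for $\{\bm{y}^k\}$ with respect to any $\bm{y}^* \in \mathcal{Y}^*$. More precisely, I would aim to produce an inequality of the form
\begin{equation*}
\Phi_{k+1}(\bm{y}^*) \;\leq\; (1 + \nu_k)\,\Phi_k(\bm{y}^*) \;-\; (1-\rho)\,\Psi_k, \qquad \Psi_k \geq 0,
\end{equation*}
where $\Phi_k(\bm{y}^*)$ is a Lyapunov function that dominates $\|\bm{y}^k - \bm{y}^*\|^2$. Since $\sum_k \nu_k < \infty$, a classical quasi-Fej\'er argument will force $\Phi_k(\bm{y}^*)$ to have a finite limit. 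Combined with the boundedness of $\{\bm{y}^k\}$ from (i) and the accumulation-point characterisation (iv), this pins down a single limit in $\mathcal{Y}^*$.

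To derive the required descent inequality I would fix a saddle point $(\bm{x}^*, \bm{y}^*)$ so that $(\bm{0},\bm{0}) \in \partial\ell(\bm{x}^*, \bm{y}^*)$ and then exploit maximal monotonicity of $\partial\ell$ applied to the pair $\big((\bm{x}^{k+1},\bm{y}^{k+1}),(\bm{x}^*,\bm{y}^*)\big)$, combined with the inexact inclusion \eqref{gen-ripALM-inexcond}, the multiplier update \eqref{gen-ripALM_ywupdate}, and the explicit form of $\partial\ell$ in \eqref{Lagform}. After rearranging and completing squares, the resulting bound should take the shape
\begin{equation*}
\|\bm{y}^{k+1}-\bm{y}^*\|^2 + \tau_k\|\bm{x}^{k+1}-\bm{x}^*\|^2 + \|\bm{y}^{k+1}-\bm{y}^k\|^2 + \tau_k\|\bm{x}^{k+1}-\bm{x}^k\|^2 \;\leq\; \|\bm{y}^k-\bm{y}^*\|^2 + \tau_k\|\bm{x}^k-\bm{x}^*\|^2 + \mathcal{E}_k,
\end{equation*}
where $\mathcal{E}_k$ collects exactly the quantities $\|\sigma_k\Delta^{k+1}\|^2$ and $2|\langle \bm{w}^k-\bm{x}^{k+1}, \sigma_k\Delta^{k+1}\rangle|$ that sit on the left of the criterion \eqref{gen-ripALM-stopcrit}. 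A short calculation shows that the right-hand side of \eqref{gen-ripALM-stopcrit} is precisely $\rho\big(\|\bm{y}^{k+1}-\bm{y}^k\|^2 + \tau_k\|\bm{x}^{k+1}-\bm{x}^k\|^2\big)$, via the identifications $\sigma_k(A\bm{x}^{k+1}-\bm{b}) = \bm{\lambda}^{k+1}-\bm{\lambda}^k$ and $\min\{\bm{\mu}^k,-\sigma_k g(\bm{x}^{k+1})\} = -(\bm{\mu}^{k+1}-\bm{\mu}^k)$, so that $\mathcal{E}_k$ is absorbed into a $\rho$-fraction of the descent terms, leaving a genuine descent with factor $(1-\rho)$.

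Setting $\Phi_k(\bm{y}^*) := \|\bm{y}^k-\bm{y}^*\|^2 + \tau_{k-1}\|\bm{x}^k-\bm{x}^*\|^2$ for any fixed $\bm{x}^* \in \mathcal{X}^*$ and using $\tau_{k+1} \leq (1+\nu_k)\tau_k$ to handle the mismatch between $\tau_k$ and $\tau_{k-1}$, the announced quasi-Fej\'er recursion follows. Standard theory then yields that $\{\Phi_k(\bm{y}^*)\}$ converges for every $\bm{y}^* \in \mathcal{Y}^*$. From (i) and (iv), some subsequence $\bm{y}^{k_j} \to \bm{y}^\infty \in \mathcal{Y}^*$; instantiating $\bm{y}^* = \bm{y}^\infty$, the existence of $\lim_k \|\bm{y}^k - \bm{y}^\infty\|^2$ together with the subsequential limit $0$ forces $\bm{y}^k \to \bm{y}^\infty$. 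The main obstacle I anticipate is handling the cross term $\langle \bm{w}^k-\bm{x}^{k+1}, \sigma_k\Delta^{k+1}\rangle$ cleanly in the derivation above: a naive expansion of $\|\bm{x}^{k+1}-\bm{x}^*\|^2$ produces an inner product against $\bm{x}^{k+1}-\bm{x}^*$ rather than against $\bm{w}^k-\bm{x}^{k+1}$, and bridging the two presumably requires exploiting $\bm{w}^{k+1} = \bm{w}^k - \sigma_k\Delta^{k+1}$ and augmenting the Lyapunov function with a term such as $\|\bm{w}^k-\bm{x}^*\|^2$, whose boundedness must then be verified (e.g.\ via (i)) so that it does not obstruct the final step in which $\|\bm{y}^k-\bm{y}^\infty\|\to 0$ is extracted.
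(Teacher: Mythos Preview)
Your derivation of the key recursive inequality is essentially the paper's: once you augment the Lyapunov function with $\|\bm{w}^k-\bm{x}^*\|^2$ and use $\bm{w}^{k+1}=\bm{w}^k-\sigma_k\Delta^{k+1}$ together with the monotonicity of $\partial\ell$ and the error criterion, you obtain exactly
\[
\|\bm{y}^{k+1}-\bm{y}^*\|^2+\|\bm{w}^{k+1}-\bm{x}^*\|^2+\tau_k\|\bm{x}^{k+1}-\bm{x}^*\|^2
\leq \|\bm{y}^{k}-\bm{y}^*\|^2+\|\bm{w}^{k}-\bm{x}^*\|^2+\tau_k\|\bm{x}^{k}-\bm{x}^*\|^2
-(1-\rho)\big(\|\bm{y}^{k+1}-\bm{y}^k\|^2+\tau_k\|\bm{x}^{k+1}-\bm{x}^k\|^2\big),
\]
from which (i)--(iv) follow as you describe.

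The genuine gap is in your final step for (v). You plan to instantiate $\bm{y}^*=\bm{y}^\infty$, use that $\Phi_k(\bm{y}^\infty)$ has a limit, and conclude $\|\bm{y}^k-\bm{y}^\infty\|\to0$ from the subsequential limit. But the Lyapunov quantity is the \emph{sum} $\|\bm{y}^k-\bm{y}^\infty\|^2+\|\bm{w}^k-\bm{x}^*\|^2+\tau_k\|\bm{x}^k-\bm{x}^*\|^2$, and along the subsequence only the first and third terms vanish; the term $\|\bm{w}^{k_j}-\bm{x}^*\|^2$ has no reason to go to zero, since $\{\bm{w}^k\}$ is merely bounded and need not converge to any element of $\mathcal{X}^*$. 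So the limit of $\Phi_k$ is typically strictly positive, and the standard quasi-Fej\'er argument only gives $\limsup_k\|\bm{y}^k-\bm{y}^\infty\|^2\leq\lim_k\Phi_k\neq0$. Boundedness of $\|\bm{w}^k-\bm{x}^*\|^2$, which is all you propose to check, is not enough.

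The paper resolves this by decoupling the $\bm{y}$-part from the $(\bm{w},\bm{x})$-part via an infimum. It sets
\[
\phi:=\liminf_{k\to\infty}\ \inf_{\bm{x}^*\in\mathcal{X}^*}\big\{\|\bm{w}^k-\bm{x}^*\|^2+\tau_k\|\bm{x}^k-\bm{x}^*\|^2\big\},
\]
chooses a subsequence $\{k_j\}$ along which this infimum tends to $\phi$ \emph{and} $\bm{y}^{k_j}\to\bm{y}^\infty\in\mathcal{Y}^*$, iterates the recursion from $k_j$ to $k$ with $\bm{y}^*=\bm{y}^\infty$, takes $\limsup_{k\to\infty}$, then takes the infimum over $\bm{x}^*\in\mathcal{X}^*$ on the right-hand side, and finally lets $j\to\infty$. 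The $\phi$ on the left (coming from the lower bound $\phi\leq\liminf_k\{\|\bm{w}^k-\bm{x}^*\|^2+\tau_k\|\bm{x}^k-\bm{x}^*\|^2\}$) cancels the $\phi$ on the right, and what remains is $\limsup_k\|\bm{y}^k-\bm{y}^\infty\|^2\leq0$. You should replace your last paragraph with this decoupling argument.
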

\begin{proof}
See Appendix \ref{sec:proof-conver-gen}.
\end{proof}

Note that the conditions on the proximal parameter sequence $\{\tau_k\}$ are quite mild and easy to satisfy. For example, one may simply set $\tau_k \equiv \tau > 0$ for all $k \geq 0$, in which case $\nu_k\equiv0$. Moreover, under the assumption that $\ell$ admits at least one saddle point, which is itself a rather weak requirement in convex optimization (which can be ensured by assuming a certain constraint qualification condition, e.g., MFCQ \cite{mangasarian1967fritz}), both the sequences $\{\bm{x}^k\}$ and $\{\bm{y}^k\}$ are bounded. Consequently, each sequence has at least one accumulation point, and by Theorem \ref{thm:convergence-gen}, any such accumulation point is an optimal solution of the respective primal or dual problem. 
In contrast, for the relative-type inexact ALM \textit{without} a proximal term proposed by Eckstein and Silva \cite{es2013practical}, one can only guarantee the boundedness of $\{\bm{y}^k\}$. 

Finally, we would like to point out that, unlike the absolute-type inexact proximal ALM \cite{lst2020asymptotically,r1976augmented}, it remains unclear whether our proposed rip$^2$ALM in Algorithm \ref{algo:rip2ALM} can be interpreted as an application of the inexact (preconditioned) proximal point algorithm (PPA) to the associated primal-dual solution mapping. Therefore, at this stage, we can only establish the global sequential convergence of the dual sequence $\{\bm{y}^k\}$ in Theorem \ref{thm:convergence-gen}, via a direct proof, which is inspired by \cite{as2016note,es2013practical} but is substantially more involved due to the presence of the preconditioned proximal term $\frac{\tau_{k}}{2\sigma_{k}}\big\|\bm{x}-\bm{x}^k\big\|^2_S$. In particular, we need to develop a new recursive inequality \eqref{gen-recursion_of_xyw} that simultaneously involves both primal and dual sequences. Moreover, as we shall see later that, under an additional error bound condition, we can also prove the global sequential convergence of the primal sequence $\{\bm{x}^k\}$, which fails to hold for Eckstein and Silva's relative-type inexact ALM \cite{es2013practical}. This further highlights another theoretical advantage of introducing a (preconditioned) proximal term in rip$^2$ALM.

\subsection{Asymptotic (super)linear convergence rate}

In this subsection, we establish the asymptotic (super)linear convergence rate of rip$^2$ALM under an error bound condition presented in Assumption \ref{asp:error-bound-Li-gen}.

\begin{assumption}\label{asp:error-bound-Li-gen}
For any $r>0$, there exists a constant $\kappa>0$ such that, for any $(\bm{x},\bm{y}) \in \left\{(\bm{x},\bm{y})\in\mathbb{R}^N\times\mathbb{R}^M \mid \mathrm{dist}\left((\bm{x},\bm{y}), \,(\partial \ell)^{-1}(\bm{0}, \bm{0})\right) \leq r\right\}$,
\begin{equation}\label{eq:error-bound-Li}
\mathrm{dist}\left((\bm{x},\bm{y}), \,(\partial\ell)^{-1}(\bm{0}, \bm{0})\right)
\leq \kappa\,\mathrm{dist}\left((\bm{0}, \bm{0}), \,\partial\ell(\bm{x},\bm{y})\right).
\end{equation}
\end{assumption}

Imposing a suitable error bound condition is a standard approach for deriving the fast asymptotic convergence rates of PPA-type and ALM-type algorithms in the convex setting; see, e.g., \cite{lst2018highly,lst2020asymptotically,l1984asymptotic,r1976augmented,r1976monotone,ylct2024corrected,zc2020linear}. In Rockafellar's seminal works \cite{r1976augmented,r1976monotone}, this was achieved under the assumption that $(\partial\ell)^{-1}$ is Lipschitz continuous at the origin with modulus $\kappa \geq 0$. Specifically, this assumption requires that there exists a \textit{unique} solution $(\bm{x}^*,\bm{y}^*)$ such that $(\bm{0},\bm{0}) \in \partial\ell(\bm{x}^*,\bm{y}^*)$ (i.e., $(\partial\ell)^{-1}(\bm{0},\bm{0})=\{(\bm{x}^*,\bm{y}^*)\}$), and that for some $\kappa>0$ and $r>0$, it holds that
\begin{equation*}
\big\|(\bm{x}-\bm{x}^*, \;\bm{y}-\bm{y}^*)\big\| \leq \kappa\,\|(\bm{p}, \bm{u})\|
\quad \mathrm{whenever} \quad
(\bm{x},\bm{y})\in(\partial \ell)^{-1}(\bm{p}, \bm{u})
~~\mathrm{and}~~\|(\bm{p}, \bm{u})\| \leq r.
\end{equation*}
This condition, however, is rather restrictive as it forces the solution set to be a singleton.

To relax this restriction, Luque \cite{l1984asymptotic} introduced a growth condition for establishing the convergence rate of PPA. His condition is essentially equivalent to the concept of local \textit{upper} Lipschitz continuity, earlier proposed by Robinson \cite{r1976implicit,r1981some}. Specifically, $(\partial\ell)^{-1}$ is called locally \textit{upper} Lipschitz continuous at the origin if $(\partial\ell)^{-1}(\bm{0},\bm{0})\neq\emptyset$ and there exist $r>0$ and $\kappa>0$ such that
\begin{equation*}
\mathrm{dist}\big((\bm{x},\bm{y}), \,(\partial\ell)^{-1}(\bm{0}, \bm{0})\big)
\leq \kappa\,\|(\bm{p}, \bm{u})\|
\quad \mathrm{whenever} \quad
(\bm{x},\bm{y})\in(\partial \ell)^{-1}(\bm{p}, \bm{u})
~~\mathrm{and}~~\|(\bm{p}, \bm{u})\| \leq r.
\end{equation*}
Unlike the Lipschitz continuity condition used by Rockafellar, the above relaxed condition allows for non-unique solutions, yet it remains sufficient to guarantee the asymptotic (super)linear convergence of the iterates in terms of their distance to the solution set or saddle points; see, e.g., \cite{lst2018highly,zc2020linear}.

As noted in \cite[Lemma 2.4]{lst2020asymptotically}, the error bound condition \eqref{eq:error-bound-Li} is even weaker than the local upper Lipschitz continuity of $(\partial\ell)^{-1}$ at the origin. The latter was used in \cite{zc2020linear} to establish the asymptotic (super)linear convergence rate of Eckstein and Silva's relative-type inexact ALM, while the former, weaker condition has also been adopted in \cite{lst2020asymptotically} and \cite{ylct2024corrected} to derive similar asymptotic rate estimates.

We are now ready to present the main results regarding the asymptotic fast convergence rate of rip$^2$ALM.

\begin{theorem}\label{thm:Q-linear-rate-gen}
Let $\ell$ be defined as in \eqref{gen-para-lag}, and let $\rho\in[0,1)$, $\{\sigma_{k}\}$ be a positive sequence satisfying that $\sigma_k\geq\sigma_{\min}>0$ for all $k\geq0$, $S\in\mathbb{R}^{N\times N}$ be a symmetric positive definite matrix, and $\{\tau_{k}\}$ be a positive sequence satisfying that
\begin{equation*}
\tau_{k}\geq\tau_{\min}>0,
\quad \tau_{k+1}\leq(1+\nu_{k})\tau_{k}
\quad \mbox{with} \quad \nu_{k}\geq0
~~\mbox{and}~~
{\textstyle\sum_{k=0}^{\infty}}\nu_{k} < +\infty.
\end{equation*}
Suppose additionally that $\ell$ admits a saddle point (i.e., $(\partial\ell)^{-1}(\bm{0},\bm{0})\neq\emptyset$), Assumption \ref{asp:error-bound-Li-gen} holds, and the sequences of parameters $\rho$, $\{\sigma_k\}$ and $\{\tau_{k}\}$ satisfy that
\begin{equation}\label{para-conds}
\sqrt{\tau_{\min}\lambda_{\min}(S)} - 2\sqrt{\rho} > 0
\quad \text{and} \quad
\liminf\limits_{k\to\infty} ~ \sigma_{k} > c\cdot\frac{2\kappa\sqrt{\tau_{\max}\lambda_{\max}(S)}\left(\rho+\sqrt{\rho\,\overline{\tau}_{\max}}\right)}{\sqrt{\tau_{\min}\lambda_{\min}(S)} - 2\sqrt{\rho}},
\end{equation}
where $c>1$ is an arbitrarily given positive constant, $\lambda_{\min}(S)$ ($\lambda_{\max}(S)$) is the smallest (largest) eigenvalue of $S$, $\tau_{\max}:=\tau_{0} \prod_{k=0}^{\infty}(1+\nu_{k})$, and $\overline{\tau}_{\max}:=\max\left\{1,\tau_{\max}\lambda_{\max}(S)\right\}$. Let $\Lambda_k := \mathrm{Diag}(\tau_{k}S,I_{M})$, $\overline{\tau}_{k} := \max\left\{1, \tau_{k}\lambda_{\max}(S)\right\}$, and
\begin{equation*}
\gamma_{k} := \left(1 - \frac{2\kappa\sqrt{\tau_{k}\lambda_{\max}(S)}\left(\rho+\sqrt{\rho\overline{\tau}_{k}}\right) + 2\sigma_{k}\sqrt{\rho}}{\sigma_{k}\sqrt{\tau_{k}\lambda_{\min}(S)}}\right) \frac{\sigma_{k}^{2}}{\kappa^2\left(\sqrt{\rho}+\sqrt{\overline{\tau}_{k}}\right)^2\overline{\tau}_{k}}.
\end{equation*}
Then, the following statements hold.
\begin{enumerate}[label=(\roman*)]
\item For any sufficiently large $k$, we have that
		\begin{equation*}
			\gamma_{k}\geq
			\left(\frac{c-1}{c}\right)
			\cdot\frac{\sqrt{\tau_{\min}\lambda_{\min}(S)} - 2\sqrt{\rho}}{\sqrt{\tau_{\min}\lambda_{\min}(S)}}
			\cdot\frac{\sigma_k^2}{\kappa^2\left(\sqrt{\rho}+\sqrt{\overline{\tau}_{\max}}\right)^2\overline{\tau}_{\max}} > 0,
		\end{equation*}
		and
		\begin{equation*}
			\mathrm{dist}_{\Lambda_{k+1}}\left((\bm{x}^{k+1},\bm{y}^{k+1}),
			\,(\partial\ell)^{-1}(\bm{0}, \bm{0})\right) \leq \mu_{k}\, \mathrm{dist}_{\Lambda_k}\left((\bm{x}^{k},\bm{y}^{k}), \,(\partial\ell)^{-1}(\bm{0}, \bm{0})\right),
		\end{equation*}
		where
		\begin{equation*}
			\mu_{k}:=\sqrt{\frac{1+\nu_{k}}{1+\gamma_{k}}}
			\quad \mbox{satisfies} \quad
			\limsup\limits_{k\to\infty}\,\left\{\mu_{k}\right\}<1
			~~\mbox{as}~~\nu_{k}\to0.
		\end{equation*}
		
\item The whole sequence $\{\bm{x}^k\}$ is convergent.
\end{enumerate}
\end{theorem}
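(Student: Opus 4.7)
The plan is to leverage the recursive primal--dual inequality (labelled \eqref{gen-recursion_of_xyw} in the excerpt) developed in the proof of Theorem~\ref{thm:convergence-gen}, and to tighten it via the error bound in Assumption~\ref{asp:error-bound-Li-gen}. As a starting point, from the inexact subdifferential inclusion \eqref{gen-ripALM-inexcond}, the multiplier update \eqref{gen-ripALM_ywupdate}, the explicit form \eqref{Lagform} of $\partial\ell$, and the complementarity identity implied by $\bm{\mu}^{k+1}=\max\{\bm{0},\bm{\mu}^k+\sigma_k g(\bm{x}^{k+1})\}$, I would first verify the key inclusion
\[
(\bm{p}^{k+1}, \bm{u}^{k+1}) \;\in\; \partial\ell(\bm{x}^{k+1}, \bm{y}^{k+1}),
\]
where $(\bm{p}^{k+1},\bm{u}^{k+1})$ is the residual pair of Theorem~\ref{thm:convergence-gen}(ii). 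Since $\{(\bm{x}^k,\bm{y}^k)\}$ is bounded and all its accumulation points lie in $(\partial\ell)^{-1}(\bm{0},\bm{0})$ by Theorem~\ref{thm:convergence-gen}(i) and (iv), the distance from the iterates to the saddle-point set tends to zero, so Assumption~\ref{asp:error-bound-Li-gen} applies at $(\bm{x}^{k+1},\bm{y}^{k+1})$ for all sufficiently large $k$ with a uniform constant $\kappa$, producing
\[
\mathrm{dist}\big((\bm{x}^{k+1}, \bm{y}^{k+1}),\,(\partial\ell)^{-1}(\bm{0}, \bm{0})\big)
\;\leq\; \kappa\,\big\|(\bm{p}^{k+1}, \bm{u}^{k+1})\big\|.
\]

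The next step is to translate the right-hand side into the weighted quadratic form that drives the Fej\'er recursion. By the definitions of $\bm{p}^{k+1}$, $\bm{u}^{k+1}$ and \eqref{gen-ripALM_ywupdate}, together with \eqref{gen-ripALM-stopcrit} used to absorb $\|\sigma_k\Delta^{k+1}\|^2$ by $\rho$ times the standard progress terms, a triangle-inequality calculation should deliver
\[
\sigma_k^2\,\big\|(\bm{p}^{k+1},\bm{u}^{k+1})\big\|^2
\;\leq\; \big(\sqrt{\rho}+\sqrt{\overline{\tau}_k}\big)^2\overline{\tau}_k\cdot\big(\|\bm{y}^{k+1}-\bm{y}^k\|^2+\tau_k\|\bm{x}^{k+1}-\bm{x}^k\|^2\big),
\]
whose right-hand side matches precisely the $\Lambda^{k+1}$-weighted dissipation term that appears, with the correct sign, in \eqref{gen-recursion_of_xyw}, up to the loss factor $1-2\sqrt{\rho}/\sqrt{\tau_k}$ arising from absorbing the cross term $\langle\bm{w}^k-\bm{x}^{k+1},\sigma_k\Delta^{k+1}\rangle$ via \eqref{gen-ripALM-stopcrit}. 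Inserting the error-bound estimate into this recursion and solving for $\mathrm{dist}_{\Lambda^{k+1}}^2((\bm{x}^{k+1},\bm{y}^{k+1}),(\partial\ell)^{-1}(\bm{0},\bm{0}))$ then yields the displayed expression for $\gamma_k$ together with the contraction factor $\mu_k=\sqrt{(1+\nu_k)/(1+\gamma_k)}$; the uniform lower bound on $\gamma_k$ in part~(i) follows by substituting $\tau_k\leq\overline{\tau}_{\max}$ and the eventual lower bound on $\sigma_k$ from \eqref{para-conds}, and superlinearity in the regime $\sigma_k\to\infty$ is immediate since $\gamma_k\to\infty$ drives $\mu_k\to 0$.

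The main obstacle will be the bookkeeping in the second step: the constants produced by the triangle inequality and by absorbing $\Delta^{k+1}$ via \eqref{gen-ripALM-stopcrit} must be packaged so that the error-bound estimate is \emph{commensurate} with the Fej\'er dissipation term in the $\Lambda^{k+1}$-weighted metric; in particular, the $\tau_k$-factor in front of $\|\bm{x}^{k+1}-\bm{x}^k\|^2$ must be preserved throughout, otherwise the clean expression for $\gamma_k$ collapses. This is precisely where the proximal term $\frac{\tau_k}{2\sigma_k}\|\bm{x}-\bm{x}^k\|^2$ in \eqref{gen-ripALM-subpro} plays an essential role: it supplies the $\tau_k\|\bm{x}^{k+1}-\bm{x}^k\|^2$ contribution that is absent from proximal-free relative-type inexact ALM frameworks such as \cite{es2013practical}, allowing the primal and dual components to be handled on an equal footing.

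For part~(ii), I would combine the Q-linear contraction of $\mathrm{dist}_{\Lambda^k}((\bm{x}^k,\bm{y}^k),(\partial\ell)^{-1}(\bm{0},\bm{0}))$ with the uniform bounds $\tau_{\min}\leq\tau_k\leq\tau_{\max}$ to deduce a linear decay of the unweighted distance of $(\bm{x}^k,\bm{y}^k)$ to the saddle-point set. Coupled with \eqref{gen-ripALM-stopcrit} and the estimate on $\|(\bm{p}^{k+1},\bm{u}^{k+1})\|$ derived in the second step, this should yield $\|\bm{x}^{k+1}-\bm{x}^k\|\leq C\,r^k$ for some $C>0$ and $r\in(0,1)$, so that $\sum_k\|\bm{x}^{k+1}-\bm{x}^k\|<\infty$. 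Consequently $\{\bm{x}^k\}$ is Cauchy, and in view of Theorem~\ref{thm:convergence-gen}(iv) its limit lies in $\mathcal{X}^*$.
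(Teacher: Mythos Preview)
There is a genuine gap in your plan for part~(i). You propose to start from the three-term Fej\'er recursion \eqref{gen-recursion_of_xyw}, which reads
\[
\|\bm{y}^{k}-\bm{y}^*\|^2 + \|\bm{w}^{k}-\bm{x}^*\|^2 + \tau_{k}\|\bm{x}^k-\bm{x}^*\|^2
\;-\;\big(\text{same at }k+1\big)
\;\geq\;(1-\rho)\big(\|\bm{y}^{k+1}-\bm{y}^{k}\|^2 + \tau_{k}\|\bm{x}^{k+1}-\bm{x}^k\|^2\big),
\]
and then feed in the error-bound estimate on the dissipation term. But this recursion carries the auxiliary quantity $\|\bm{w}^{k}-\bm{x}^*\|^2$, whereas the theorem claims a contraction purely in
$\mathrm{dist}_{\Lambda^{k}}\big((\bm{x}^{k},\bm{y}^{k}),(\partial\ell)^{-1}(\bm{0},\bm{0})\big)$,
which involves only $(\bm{x}^k,\bm{y}^k)$. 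There is no mechanism in your outline to eliminate the $\bm{w}^k$-block, so the best you would obtain is a contraction of the three-term sum, not of $\mathrm{dist}_{\Lambda^{k}}$, and the stated formula for $\gamma_k$ would not emerge. Relatedly, your identification of the loss factor $1-2\sqrt{\rho}/\sqrt{\tau_k}$ as ``arising from absorbing the cross term $\langle\bm{w}^k-\bm{x}^{k+1},\sigma_k\Delta^{k+1}\rangle$'' is not correct: that absorption produces the factor $(1-\rho)$ in \eqref{gen-recursion_of_xyw}, not the factor in the statement.

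The paper avoids the $\bm{w}^k$-sequence entirely in this proof. It combines only \eqref{gen-yyineq} and \eqref{gen-xxineq} (not \eqref{gen-wwineq}) to obtain the two-term recursion
\[
\big(\tau_k\|\bm{x}^k-\bm{x}^*\|^2+\|\bm{y}^k-\bm{y}^*\|^2\big)
-\big(\tau_k\|\bm{x}^{k+1}-\bm{x}^*\|^2+\|\bm{y}^{k+1}-\bm{y}^*\|^2\big)
\;\geq\;\|\cdot\|_{\mathrm{prog}}^2 \;-\; 2\sigma_k\|\bm{x}^{k+1}-\bm{x}^*\|\,\|\Delta^{k+1}\|,
\]
specialized to $(\bm{x}^*,\bm{y}^*)=(\overline{\bm{x}}^k,\overline{\bm{y}}^k)$. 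The key additional idea is to control the residual term $2\sigma_k\|\bm{x}^{k+1}-\overline{\bm{x}}^k\|\,\|\Delta^{k+1}\|$ by bounding $\|\Delta^{k+1}\|$ via \eqref{gen-ripALM-stopcrit} and bounding $\|\bm{x}^{k+1}-\overline{\bm{x}}^k\|$ via the triangle inequality $\|\bm{x}^{k+1}-\overline{\bm{x}}^{k+1}\|+\|\bm{x}^{k+1}-\bm{x}^k\|$, where the first summand is itself estimated through the error bound (this is where your inequality $\sigma_k\|(\bm{p}^{k+1},\bm{u}^{k+1})\|\le(\sqrt{\rho}+\sqrt{\overline{\tau}_k})\|\cdot\|_{\mathrm{prog}}$ enters). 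It is this second summand, combined with $\|\Delta^{k+1}\|\le\tfrac{\sqrt{\rho}}{\sigma_k}\|\cdot\|_{\mathrm{prog}}$, that produces the $2\sqrt{\rho}/\sqrt{\tau_k}$ loss in the first factor of $\gamma_k$; the remaining $2\kappa\sqrt{\tau_k}(\rho+\sqrt{\rho\overline{\tau}_k})/\sigma_k$ loss comes from the first summand. Once this is in place, your Step~III and your argument for part~(ii) go through essentially as written.
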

\begin{proof}
See Appendix \ref{sec:proof-Q-linear-rate-gen}.
\end{proof}

Note from Theorem \ref{thm:Q-linear-rate-gen} that achieving a fast asymptotic convergence rate requires a slightly stronger condition on $\rho$, namely $\rho < \frac{1}{4}\tau_{\min}\lambda_{\min}(S)$, with $\tau_{\min}\lambda_{\min}(S) < 4$. Moreover, by examining the expression of $\gamma_k$, we see that after a finite number of iterations, $\gamma_k$ becomes proportional to the squared penalty parameter $\sigma_k^2$, provided that $\rho$, $\{\sigma_k\}$ and $\{\tau_k\}$ satisfy the conditions in \eqref{para-conds}. Consequently, from the expression of $\mu_k$, we further see that choosing a sufficiently large $\sigma_k$ drives the convergence factor $\mu_k$ arbitrarily close to zero, thereby leading to an asymptotic convergence rate that approaches superlinear. However, from a computational perspective, excessively large values of $\sigma_k$ may induce numerical instability and are therefore not recommended in practice. Indeed, the practical performance gap between linear and superlinear convergence is often modest: choosing a moderately large $\sigma_k$ typically results in only a slight increase in the number of iterations compared to using excessively large values. Finally, although the error bound condition has become a standard analytical tool in the convergence rate analysis, it is generally difficult to verify, especially for general nonlinear programming problems. Future research may explore relaxing this requirement, identifying problem classes where the condition can be explicitly verified by exploiting structural properties.

\subsection{Global ergodic convergence rate}

In this subsection, we present a novel analysis of the global \emph{ergodic} convergence rate of the proposed rip$^2$ALM, \textit{without} assuming the error bound condition in Assumption \ref{asp:error-bound-Li-gen}. Specifically, we establish ergodic convergence rates with respect to two fundamental measures of optimality in convex constrained optimization: (1) the violation of the primal feasibility constraints, and (2) the residual error in the primal objective function value. Such ergodic rate estimates are commonly used for evaluating the efficiency of ALM-type methods (see, e.g., \cite{qclh2025convergence,xu2021iteration}). We provide rigorous characterizations of the averaged behavior of rip$^2$ALM across its iterates and, when combined with the convergence results established in previous sections, offer a more complete picture of the algorithm's overall performance.

To streamline the forthcoming analysis, we introduce a few useful constants: 
\begin{itemize}
\item $B_{x}$: an upper bound on the primal sequence $\{\bm{x}^{k}\}$, ensuring the iterates remain contained within a compact set;

\item $B_{y}$: an upper bound on the dual sequence $\{\bm{y}^{k}\}$, guaranteeing stability of the multiplier updates;

\item $\tau_{\max}:=\tau_{0}\prod_{k=0}^{\infty}(1+\nu_{k})$, which bounds the growth of the proximal parameters and plays a central role in controlling the conditioning of the subproblems;
	
\item $C_{0}:=\tfrac{\tau_{0}}{2}\|\bm{x}^{*}-\bm{x}^{0}\|_{S}^{2}+\tfrac{1}{2}\|\bm{y}^{0}\|^{2} + \tfrac{1}{2}\|\bm{x}^{*} - \bm{w}^{0}\|^{2} + \lambda_{\max}(S)(\|\bm{x}^*\|^2+B_x^2)\tau_{\max}\sum_{i=0}^{\infty}\nu_i$, a finite constant that summarizes the initial error and the cumulative effect of parameter perturbations;
	
\item $C_{xy}:= \frac{1}{2} \sum_{k=0}^{\infty}\left(\|\bm{y}^{k+1} - \bm{y}^{k}\|^{2} + \tau_{k}\|\bm{x}^{k+1} - \bm{x}^{k}\|_{S}^{2}\right)$, the accumulation of the successive change, and it can seen from \eqref{gen-addineq1} that $C_{xy}$ is a finite constant.
\end{itemize}

It follows directly from Theorem~\ref{thm:convergence-gen} and its proof that all the above constants are finite and well-defined under the standing assumptions. These quantities will serve as the building blocks in our subsequent analysis, enabling us to derive explicit bounds on the ergodic convergence rates of rip$^2$ALM. 

\begin{theorem}\label{thm:ergo-rate}
Suppose that all the assumptions in Theorem \ref{thm:convergence-gen} hold. Let $(\bm{x}^{*},\bm{y}^{*})\in\mathcal{X}^{*}\times\mathcal{Y}^{*}$ be an arbitrary saddle point of $\ell$, and let $\{\bm{x}^{k}\}$ and $\{\bm{y}^{k}:=(\bm{\lambda}^k,\bm{\mu}^k)\}$ be sequences generated by the rip$^2$ALM in Algorithm \ref{algo:rip2ALM}. Define the ergodic primal sequence $\{\widehat{\bm{x}}^k\}$ as
\begin{equation*}
\widehat{\bm{x}}^k:=\frac{\sum_{i=0}^{k-1} \sigma_i\bm{x}^{i+1}}{\sum_{i=0}^{k-1}\sigma_{i}}.
\end{equation*}
Then, for all $k\geq 1$, it holds that
\begin{equation}\label{primfeas-vio}
\textnormal{\texttt{feas}}(\widehat{\bm{x}}^k) := \begin{Vmatrix}
A \widehat{\bm{x}}^k-\bm{b} \\
\max\left\{\bm{0},\,g(\widehat{\bm{x}}^k)\right\}
\end{Vmatrix}
\leq \Xi_{k} := \frac{2B_{y}}{\sum_{i=0}^{k-1}\sigma_{i}}
\end{equation}
and
\begin{equation}\label{primobj-gap}
-\|\bm{y}^{*}\|\cdot\Xi_{k} - \frac{\sigma_{k}}{2}\cdot\Xi_{k}^{2}
\leq f(\widehat{\bm{x}}^{k}) - f(\bm{x}^{*})
\leq \cfrac{C_{0} + {\rho}C_{xy}}{\sum_{i=0}^{k-1}\sigma_{i}}.
\end{equation}
\end{theorem}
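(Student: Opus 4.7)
The plan is to establish \eqref{primfeas-vio} first, then use it together with the saddle-point inequality for the lower bound in \eqref{primobj-gap}, and finally derive the upper bound via the inexact subproblem optimality condition and a telescoping argument. The feasibility estimate is the most direct: telescoping the $\bm{\lambda}$-update in \eqref{gen-ripALM_ywupdate} gives $A\widehat{\bm{x}}^k-\bm{b}=(\bm{\lambda}^k-\bm{\lambda}^0)/\sum_{i=0}^{k-1}\sigma_i$. Since the projection in the $\bm{\mu}$-update ensures $\bm{\mu}^{i+1}\geq \bm{\mu}^{i}+\sigma_{i}g(\bm{x}^{i+1})$ componentwise, we have $\sigma_{i}g(\bm{x}^{i+1})\leq \bm{\mu}^{i+1}-\bm{\mu}^{i}$; summing and invoking convexity of each $g_j$ then yields $g(\widehat{\bm{x}}^k)\leq (\bm{\mu}^k-\bm{\mu}^0)/\sum_{i=0}^{k-1}\sigma_i$ componentwise. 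Taking the positive part and combining the two estimates gives $\|(A\widehat{\bm{x}}^k-\bm{b},\,\max\{\bm{0},g(\widehat{\bm{x}}^k)\})\|\leq \|\bm{y}^k-\bm{y}^0\|/\sum_{i=0}^{k-1}\sigma_i\leq 2B_{y}/\sum_{i=0}^{k-1}\sigma_i$, which is exactly \eqref{primfeas-vio}.

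The main obstacle will be the upper bound in \eqref{primobj-gap}. Substituting the explicit form of $\partial_{x}\mathcal{L}_{\sigma_k}$ into \eqref{gen-ripALM-inexcond} together with the multiplier updates in \eqref{gen-ripALM_ywupdate} exhibits a subgradient $\bm{q}^{k+1}:=\Delta^{k+1}-\tau_k\sigma_k^{-1}(\bm{x}^{k+1}-\bm{x}^k)-A^{\top}\bm{\lambda}^{k+1}-\nabla g(\bm{x}^{k+1})^{\top}\bm{\mu}^{k+1}\in\partial f(\bm{x}^{k+1})$, so convexity of $f$ yields $f(\bm{x}^{k+1})-f(\bm{x}^*)\leq \langle \bm{q}^{k+1},\bm{x}^{k+1}-\bm{x}^*\rangle$. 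Three reductions convert each term into something telescopable: (i) $A\bm{x}^*=\bm{b}$ and the $\bm{\lambda}$-update give $\langle A^{\top}\bm{\lambda}^{k+1},\bm{x}^{k+1}-\bm{x}^*\rangle=\sigma_k^{-1}\langle \bm{\lambda}^{k+1},\bm{\lambda}^{k+1}-\bm{\lambda}^k\rangle$; (ii) componentwise convexity of $g_j$ together with $\bm{\mu}^{k+1}\geq\bm{0}$ and $g(\bm{x}^*)\leq\bm{0}$ produces $\langle\nabla g(\bm{x}^{k+1})^{\top}\bm{\mu}^{k+1},\bm{x}^{k+1}-\bm{x}^*\rangle\geq \langle \bm{\mu}^{k+1},g(\bm{x}^{k+1})\rangle$; and (iii) a case analysis on the projection defining $\bm{\mu}^{k+1}$ yields the exact identity $\sigma_k\langle \bm{\mu}^{k+1},g(\bm{x}^{k+1})\rangle=\langle \bm{\mu}^{k+1},\bm{\mu}^{k+1}-\bm{\mu}^k\rangle$. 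Multiplying by $\sigma_k$ and invoking the three-point identities on $\langle\bm{\lambda}^{k+1},\bm{\lambda}^{k+1}-\bm{\lambda}^k\rangle$, $\langle\bm{\mu}^{k+1},\bm{\mu}^{k+1}-\bm{\mu}^k\rangle$, and $\tau_k\langle\bm{x}^{k+1}-\bm{x}^k,\bm{x}^{k+1}-\bm{x}^*\rangle$ produces a per-iteration estimate that telescopes on summation over $i=0,\dots,k-1$. The delicate point is that the proximal coefficient in front of $\|\bm{x}^{i+1}-\bm{x}^*\|^2$ is $\tau_i$ rather than $\tau_{i+1}$; the resulting mismatch contributes an extra $\tfrac{\nu_i\tau_i}{2}\|\bm{x}^{i+1}-\bm{x}^*\|^2$ per step, and by boundedness of $\{\bm{x}^k\}$ and summability of $\{\nu_k\}$ its total is at most $(\|\bm{x}^*\|^2+B_{x}^{2})\tau_{\max}\sum_i\nu_i$---precisely the correction absorbed in $C_{0}$. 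Cauchy--Schwarz on $\sigma_i\langle\Delta^{i+1},\bm{x}^{i+1}-\bm{x}^*\rangle$ and Jensen's inequality applied to convex $f$ then deliver the stated upper bound.

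The lower bound is the easiest. From the saddle-point inequality $\ell(\bm{x}^*,\bm{y}^*)\leq \ell(\widehat{\bm{x}}^k,\bm{y}^*)$ together with the KKT conditions $A\bm{x}^*=\bm{b}$ and $\langle\bm{\mu}^*,g(\bm{x}^*)\rangle=0$, one obtains $f(\widehat{\bm{x}}^k)-f(\bm{x}^*)\geq -\langle\bm{\lambda}^*,A\widehat{\bm{x}}^k-\bm{b}\rangle-\langle\bm{\mu}^*,g(\widehat{\bm{x}}^k)\rangle$. Splitting $g(\widehat{\bm{x}}^k)$ into its positive and negative parts and using $\bm{\mu}^*\geq\bm{0}$ shows the negative part contributes nonnegatively; Cauchy--Schwarz combined with \eqref{primfeas-vio} then gives $f(\widehat{\bm{x}}^k)-f(\bm{x}^*)\geq -\|\bm{y}^*\|\Xi_k$, which is in fact sharper than the stated lower bound, so retaining the nonpositive extra term $-\tfrac{\sigma_k}{2}\Xi_k^{2}$ in \eqref{primobj-gap} is trivial.
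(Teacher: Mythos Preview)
Your proposal is correct. For the feasibility bound \eqref{primfeas-vio} and the upper bound in \eqref{primobj-gap}, your argument coincides with the paper's: the paper packages the subgradient of $f$, the $A^{\top}\bm{\lambda}^{k+1}$ term, the $\nabla g(\bm{x}^{k+1})^{\top}\bm{\mu}^{k+1}$ term, and the proximal term into the single strong-convexity inequality $\Psi_k(\bm{x}^*)\geq\Psi_k(\bm{x}^{k+1})+\langle\Delta^{k+1},\bm{x}^*-\bm{x}^{k+1}\rangle+\tfrac{\tau_k}{2\sigma_k}\|\bm{x}^*-\bm{x}^{k+1}\|^2$ for $\Psi_k(\bm{x}):=\mathcal{L}_{\sigma_k}(\bm{x},\bm{\lambda}^k,\bm{\mu}^k)+\tfrac{\tau_k}{2\sigma_k}\|\bm{x}-\bm{x}^k\|^2$ and then expands, whereas you unpack that inequality term-by-term via your reductions (i)--(iii). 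Both arrive at the same per-iteration estimate, handle the $\tau_k\to\tau_{k+1}$ mismatch via $\nu_k$, and telescope identically.

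For the lower bound you take a genuinely different and cleaner route. The paper invokes the augmented-Lagrangian characterization $f(\bm{x}^*)=\min_{\bm{x}}\mathcal{L}_{\sigma_k}(\bm{x},\bm{\lambda}^*,\bm{\mu}^*)$, bounds $\mathcal{L}_{\sigma_k}(\widehat{\bm{x}}^k,\bm{\lambda}^*,\bm{\mu}^*)$ via the pointwise inequality $\|\max\{\bm{0},\bm{\mu}^*+\sigma_k g(\bm{x}')\}\|^2\leq\|\bm{\mu}^*+\max\{\bm{0},\sigma_k g(\bm{x}')\}\|^2$, and this is what produces the extra quadratic term $-\tfrac{\sigma_k}{2}\Xi_k^2$. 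Your saddle-point argument $\ell(\bm{x}^*,\bm{y}^*)\leq\ell(\widehat{\bm{x}}^k,\bm{y}^*)$ together with complementary slackness is shorter and, as you note, yields the sharper bound $f(\widehat{\bm{x}}^k)-f(\bm{x}^*)\geq -\|\bm{y}^*\|\Xi_k$, making the $\Xi_k^2$ term superfluous. The paper's route has the minor advantage of making explicit why the penalty parameter $\sigma_k$ might enter the lower bound in other analyses, but here your direct argument is preferable.
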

\begin{proof}
See Appendix \ref{sec:proof-ergo-rate}.
\end{proof}

From Theorem~\ref{thm:ergo-rate}, we see that under the same assumptions of Theorem~\ref{thm:convergence-gen}, both the sequences $\left\{\texttt{feas}(\widehat{\bm{x}}^k)\right\}
$ and $\left\{\left|f(\widehat{\bm{x}}^{k})-f(\bm{x}^{*})\right|\right\}$ converge at  the rate of $\mathcal{O}(1/\sum_{i=0}^k\sigma_i)$. In other words, the convergence behavior is governed by the growth rate of the cumulative penalty sequence $\sum_{i=0}^{k-1}\sigma_i$. Since the penalty parameters $\{\sigma_k\}$ can be chosen with considerable flexibility, different convergence rates can be achieved depending on this choice. In particular, we have the following results.

\begin{corollary}
Suppose that all assumptions in Theorem \ref{thm:ergo-rate} hold. Then, the following statements hold.
\begin{itemize}[leftmargin=0.8cm]
\item If $0 < \sigma_{\min} \leq \sigma_k \leq \sigma_{\max} < \infty$, then
		$\textnormal{\texttt{feas}}(\widehat{\bm{x}}^k) = \mathcal{O}\!\left(\tfrac{1}{k}\right)$ and $\left|f(\widehat{\bm{x}}^{k})-f(\bm{x}^{*})\right| = \mathcal{O}\!\left(\tfrac{1}{k}\right)$;
		
\item If $\sigma_k = k+1$, then
		$\textnormal{\texttt{feas}}(\widehat{\bm{x}}^k) = \mathcal{O}\!\left(\tfrac{1}{k^2}\right)$ and $\left|f(\widehat{\bm{x}}^{k})-f(\bm{x}^{*})\right| = \mathcal{O}\!\left(\tfrac{1}{k^2}\right)$;
		
\item If $\sigma_k = \sigma_0 c^k$ with some $c>1$, then
		$\textnormal{\texttt{feas}}(\widehat{\bm{x}}^k) = \mathcal{O}(c^{-k})$ and $\left|f(\widehat{\bm{x}}^{k})-f(\bm{x}^{*})\right| = \mathcal{O}(c^{-k})$.
\end{itemize}
\end{corollary}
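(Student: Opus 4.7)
The plan is to directly specialize the bounds \eqref{primfeas-vio} and \eqref{primobj-gap} of Theorem~\ref{thm:ergo-rate} to each of the three choices of $\{\sigma_k\}$, using the summability condition \eqref{sumcond} to keep the numerator of the objective upper bound finite. The only nontrivial check is to verify that the negative quadratic term $\tfrac{\sigma_k}{2}\Xi_k^2$ appearing on the lower-bound side of \eqref{primobj-gap} decays \emph{at least} as fast as the feasibility rate $\Xi_k$, so that both sides of \eqref{primobj-gap} admit the same asymptotic order.

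First I would invoke the summability condition \eqref{sumcond} to conclude that there exists a finite constant
\[
\widetilde{C} := C_{0} + \sqrt{2(\|\bm{x}^*\|^2+B_x^2)}\,\sum_{i=0}^{\infty}\|\sigma_i\Delta^{i+1}\| < \infty,
\]
so that the right-hand side of \eqref{primobj-gap} is uniformly bounded by $\widetilde{C}/\sum_{i=0}^{k-1}\sigma_i$ for every $k\geq 1$. Together with \eqref{primfeas-vio}, both $\texttt{feas}(\widehat{\bm{x}}^k)$ and the upper side of $f(\widehat{\bm{x}}^k)-f(\bm{x}^*)$ are of order $\mathcal{O}\bigl(1/\sum_{i=0}^{k-1}\sigma_i\bigr)$. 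This reduces the whole corollary to a computation of the asymptotic growth of $\sum_{i=0}^{k-1}\sigma_i$ under each scheme, combined with a check of the lower-bound term.

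Next I would dispatch the three cases:
\begin{itemize}[leftmargin=0.8cm]
\item \textbf{Bounded $\sigma_k$:} $\sum_{i=0}^{k-1}\sigma_i\geq k\sigma_{\min}$, so $\Xi_k=\mathcal{O}(1/k)$ and $\sigma_k\Xi_k^2\leq\sigma_{\max}\cdot\mathcal{O}(1/k^2)=\mathcal{O}(1/k^2)$, which is dominated by $\Xi_k$.
\item \textbf{$\sigma_k=k+1$:} $\sum_{i=0}^{k-1}(i+1)=k(k+1)/2=\Theta(k^2)$, so $\Xi_k=\mathcal{O}(1/k^2)$ and $\sigma_k\Xi_k^2=(k+1)\cdot\mathcal{O}(1/k^4)=\mathcal{O}(1/k^3)$, again dominated.
\item \textbf{$\sigma_k=\sigma_0 c^k$ with $c>1$:} $\sum_{i=0}^{k-1}\sigma_0 c^i=\sigma_0(c^k-1)/(c-1)=\Theta(c^k)$, so $\Xi_k=\mathcal{O}(c^{-k})$ and $\sigma_k\Xi_k^2=\sigma_0 c^k\cdot\mathcal{O}(c^{-2k})=\mathcal{O}(c^{-k})$, which matches (not dominates) the feasibility rate.
\end{itemize}
In every case the feasibility rate and the objective upper bound coincide, and the negative quadratic term $\tfrac{\sigma_k}{2}\Xi_k^2$ on the lower-bound side of \eqref{primobj-gap} is of the same order or smaller, so $|f(\widehat{\bm{x}}^k)-f(\bm{x}^*)|$ inherits the stated rate.

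The only subtle point—hence the main obstacle—is the third case, where $\sigma_k\Xi_k^2$ is of the \emph{same} order as $\Xi_k$ rather than strictly smaller. I would therefore make explicit that, because $\|\bm{y}^*\|\Xi_k$ and $\tfrac{\sigma_k}{2}\Xi_k^2$ are both $\mathcal{O}(c^{-k})$, their sum is still $\mathcal{O}(c^{-k})$, so the two-sided bound \eqref{primobj-gap} yields $|f(\widehat{\bm{x}}^k)-f(\bm{x}^*)|=\mathcal{O}(c^{-k})$ as claimed. Everything else is straightforward summation of geometric or arithmetic series, so no additional machinery beyond Theorem~\ref{thm:ergo-rate} is required.
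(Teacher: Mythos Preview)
Your proposal is correct and follows essentially the same approach as the paper, which in fact states the corollary without any separate proof, treating it as an immediate consequence of Theorem~\ref{thm:ergo-rate} and the preceding discussion about the $\mathcal{O}\bigl(1/\sum_{i=0}^{k-1}\sigma_i\bigr)$ rate. Your extra care in checking that the quadratic term $\tfrac{\sigma_k}{2}\Xi_k^2$ in the lower bound of \eqref{primobj-gap} does not spoil the stated rate (particularly in the geometric case) is a detail the paper leaves implicit.
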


In principle, by selecting a sufficiently fast-growing sequence $\{\sigma_k\}$, one could obtain arbitrarily fast ergodic convergence rates. However, such choices come with potential drawbacks. Rapid growth in $\{\sigma_k\}$ typically leads to increasingly ill-conditioned subproblems, making them much harder to solve numerically.

\section{Conclusion}\label{sec-conclusion}

In this work, we established convergence properties for a relative-type inexact preconditioned proximal augmented Lagrangian method (rip$^2$ALM) applied to general convex nonlinear programming. Specifically, we proved the global convergence of the sequence generated by rip$^2$ALM under standard assumptions and derived its asymptotic (super)linear convergence rate under a suitable error bound condition, providing a refined characterization of its asymptotic behavior. Moreover, we developed a novel global ergodic convergence analysis with respect to both primal feasibility violations and primal objective gaps. Collectively, these results underscore the robustness of rip$^2$ALM in practical applications and offer guidance for balancing the solution accuracy, the computational efficiency, and inexactness in subproblem minimization.

Looking ahead, the theoretical framework developed here gives rise to several interesting topics to explore. One avenue is to extend the analysis to structured problem classes, where additional structure can be leveraged to relax technical conditions and/or obtain improved convergence rates. Another topic is to investigate adaptive strategies, including data-driven and reinforcement learning approaches, for tuning hyper-parameters to further enhance practical performance. We believe that the convergence properties established in this article would provide a solid foundation for advancing the theory of inexact augmented Lagrangian methods and guiding their application to large-scale problems in machine learning, operations research, signal processing, and beyond.

\section*{Acknowledgments}

\noindent The research of Lei Yang is supported in part by the National Key Research and Development Program of China under grant 2023YFB3001704, and the National Natural Science Foundation of China under grant 12301411.
The research of Kim-Chuan Toh is supported in part by the
Ministry of Education, Singapore, under its Academic Research Fund Tier 2 grant (MOE-T2EP20224-0017).

\appendix

\section{A technical lemma}\label{sec:proof-opt-conditions}

We present a useful lemma that will be used in our proofs.
\begin{lemma}\label{lem:opt-conditions}
The sequences
\begin{equation*}
\{\bm{x}^k\}_{k=0}^{\infty}, \quad
\{\Delta^k\}_{k=1}^{\infty}, \quad
\{\bm{w}^k\}_{k=0}^{\infty} \subset \mathbb{R}^{N},
\quad \text{and} \quad
\{\bm{y}^k:=(\bm{\lambda}^{k}, \bm{\mu}^{k})\}_{k=0}^{\infty} \subset \mathbb{R}^{M}
\end{equation*}
generated by the rip$^2$ALM in Algorithm \ref{algo:rip2ALM} satisfy, for all $k \geq 0$, the following conditions:
\begin{numcases}{}
\big(\Delta^{k+1}-\tau_k\sigma_{k}^{-1}S(\bm{x}^{k+1}-\bm{x}^{k}), \,\sigma_{k}^{-1}\big(\bm{y}^{k}-\bm{y}^{k+1}\big)\big)
\in \partial \ell\big(\bm{x}^{k+1}, \bm{y}^{k+1}\big), \label{gen-optcond1}\\[5pt]
2 \big|\langle \bm{w}^{k}-\bm{x}^{k+1}, \sigma_k\Delta^{k+1}\rangle\big|
+\big\|\sigma_{k}\Delta^{k+1}\big\|^2
\leq \rho\Big(\big\|\bm{y}^{k+1}-\bm{y}^{k}\big\|^2
+ \tau_k\big\|\bm{x}^{k+1}-\bm{x}^{k}\big\|^{2}_{S}\Big),
\label{gen-optcond2}\\[5pt]
\bm{w}^{k+1} = \bm{w}^{k}-\sigma_k \Delta^{k+1}. \label{gen-optcond3}
\end{numcases}
\end{lemma}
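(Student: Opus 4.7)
The three conditions are essentially restatements of the updates and error criterion embedded in Algorithm~\ref{algo:ripALM-gen}, so the proof proceeds by direct verification of each one; the work lies in translating the mixed primal-dual updates into the subdifferential inclusion \eqref{gen-optcond1}.

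First, \eqref{gen-optcond3} is immediate from the last line of \eqref{gen-ripALM_ywupdate}. For \eqref{gen-optcond2}, my plan is to compare the right-hand side of the error criterion \eqref{gen-ripALM-stopcrit} with that of \eqref{gen-optcond2}: it suffices to show the identity $\|\bm{y}^{k+1}-\bm{y}^{k}\|^{2} = \|\sigma_{k}(A\bm{x}^{k+1}-\bm{b})\|^{2} + \|\min\{\bm{\mu}^{k},-\sigma_{k}g(\bm{x}^{k+1})\}\|^{2}$. The $\bm{\lambda}$-block is immediate from $\bm{\lambda}^{k+1}-\bm{\lambda}^{k}=\sigma_{k}(A\bm{x}^{k+1}-\bm{b})$. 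For the $\bm{\mu}$-block, I will argue componentwise: using the elementary identity $\max\{0,t\}-s = -\min\{s,-t+s\}$ with $s=\mu_i^k$ and $t=\mu_i^k+\sigma_k g_i(\bm{x}^{k+1})$, one checks that $\mu_i^{k+1}-\mu_i^k=\sigma_k g_i(\bm{x}^{k+1})$ when $\mu_i^k+\sigma_k g_i(\bm{x}^{k+1})\geq 0$ and $\mu_i^{k+1}-\mu_i^k=-\mu_i^k$ otherwise; in both cases $|\mu_i^{k+1}-\mu_i^k| = |\min\{\mu_i^k,-\sigma_k g_i(\bm{x}^{k+1})\}|$, which yields the desired identity after squaring and summing.

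The main substantive step is \eqref{gen-optcond1}. By the explicit form \eqref{Lagform} of $\partial\ell$, this amounts to verifying three things at the iterate $(\bm{x}^{k+1},\bm{y}^{k+1})$: the $\bm{\lambda}$-component $\sigma_k^{-1}(\bm{\lambda}^k-\bm{\lambda}^{k+1})=\bm{b}-A\bm{x}^{k+1}$, which is the first line of \eqref{gen-ripALM_ywupdate}; the $\bm{\mu}$-component $\sigma_k^{-1}(\bm{\mu}^k-\bm{\mu}^{k+1})\in -g(\bm{x}^{k+1})+\mathcal{N}_{\mathbb{R}_+^{m_2}}(\bm{\mu}^{k+1})$, which I would derive by rewriting the $\bm{\mu}$-update as the projection $\bm{\mu}^{k+1}=\mathrm{Proj}_{\mathbb{R}_+^{m_2}}(\bm{\mu}^k+\sigma_k g(\bm{x}^{k+1}))$ and invoking the characterization that $\bm{v}-\mathrm{Proj}_C(\bm{v})\in \mathcal{N}_C(\mathrm{Proj}_C(\bm{v}))$, followed by dividing by $\sigma_k>0$ (which preserves the normal cone since it is a cone); and the $\bm{x}$-component, namely $\Delta^{k+1}-\tau_k\sigma_k^{-1}(\bm{x}^{k+1}-\bm{x}^k)\in \partial f(\bm{x}^{k+1})+A^{\top}\bm{\lambda}^{k+1}+\nabla g(\bm{x}^{k+1})^{\top}\bm{\mu}^{k+1}$.

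The last inclusion is the only place where genuine computation is needed. Starting from \eqref{gen-ripALM-inexcond}, I would compute the partial subdifferential of $\mathcal{L}_{\sigma_k}$ in $\bm{x}$ using \eqref{gen-auglag}, differentiating the smooth squared-feasibility term to obtain $\sigma_k A^{\top}(A\bm{x}-\bm{b})$ and differentiating the composite term $\frac{1}{2\sigma_k}\|\max\{\bm{0},\bm{\mu}+\sigma_k g(\bm{x})\}\|^2$ via the chain rule (using smoothness of $g$ and the almost-everywhere differentiability of the max, whose weak derivative is $\max\{\bm{0},\bm{\mu}+\sigma_k g(\bm{x})\}$ itself—a standard calculation). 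Evaluating at $\bm{x}^{k+1}$ and using the dual updates, the pieces collapse via $A^{\top}\bm{\lambda}^k+\sigma_k A^{\top}(A\bm{x}^{k+1}-\bm{b}) = A^{\top}\bm{\lambda}^{k+1}$ and $\max\{\bm{0},\bm{\mu}^k+\sigma_k g(\bm{x}^{k+1})\}=\bm{\mu}^{k+1}$, giving exactly the required inclusion. The main obstacle is keeping the calculus of $\partial_x\mathcal{L}_{\sigma_k}$ honest (since $\max\{\bm{0},\cdot\}$ is nonsmooth), but this is standard for the augmented Lagrangian and justifies treating $\bm{\mu}^{k+1}$ as the exact generalized gradient of the penalty term.
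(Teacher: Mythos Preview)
Your proposal is correct and follows essentially the same approach as the paper: verify \eqref{gen-optcond1} by expanding $\partial_x\mathcal{L}_{\sigma_k}$ from \eqref{gen-auglag} and \eqref{gen-ripALM-inexcond}, then collapsing via the dual updates to match the form \eqref{Lagform}, and read off \eqref{gen-optcond2}--\eqref{gen-optcond3} directly from the algorithm. Your treatment is in fact more detailed than the paper's in two places---the componentwise identity $\bm{\mu}^{k+1}-\bm{\mu}^k=-\min\{\bm{\mu}^k,-\sigma_k g(\bm{x}^{k+1})\}$ for \eqref{gen-optcond2}, and the projection/normal-cone argument for the $\bm{\mu}$-component of \eqref{gen-optcond1}---where the paper simply asserts that these follow from the updates; one minor remark is that the penalty term $\tfrac{1}{2\sigma_k}\|\max\{\bm{0},\cdot\}\|^2$ is actually $C^1$ (not merely a.e.\ differentiable), so the chain-rule step is fully classical.
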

\begin{proof}
We first prove \eqref{gen-optcond1}. From the inexact optimality condition \eqref{rip2ALM-inexcond} and the definition of the augmented Lagrangian $\mathcal{L}_{\sigma}$ in \eqref{gen-auglag}, it follows that
\begin{equation*}
\begin{aligned}
&\quad \Delta^{k+1}-\tau_k\sigma_{k}^{-1}S(\bm{x}^{k+1}-\bm{x}^{k}) \\
&\in \partial f(\bm{x}^{k+1}) + A^{\top}\bm{\lambda}^{k} + \sigma_k A^{\top}(A\bm{x}^{k+1}-\bm{b})
+ \nabla g(\bm{x}^{k+1})^{\top}\max\{\bm{0},\,\bm{\mu}^{k}+\sigma_{k}g(\bm{x}^{k+1})\}.
\end{aligned}
\end{equation*}
By the updating rules of $\bm{\lambda}^{k+1}$ and $\bm{\mu}^{k+1}$ in \eqref{rip2ALM_ywupdate}, this relation can be further simplified to
\begin{equation*}
\Delta^{k+1}-\tau_k\sigma_{k}^{-1}S(\bm{x}^{k+1}-\bm{x}^{k})
\in \partial f(\bm{x}^{k+1}) + A^{\top}\bm{\lambda}^{k+1} + \nabla g(\bm{x}^{k+1})^{\top}\bm{\mu}^{k+1}.
\end{equation*}
Moreover, using the updating rules of $\bm{\lambda}^{k+1}$ and $\bm{\mu}^{k+1}$ again, we obtain that
\begin{equation*}
\sigma_k^{-1}(\bm{\lambda}^k-\bm{\lambda}^{k+1}) = \bm{b}-A\bm{x}^{k+1},
\qquad
\sigma_k^{-1}(\bm{\mu}^k-\bm{\mu}^{k+1})
\in -g(\bm{x}^{k+1}) + \mathcal{N}_{\mathbb{R}_+^{m_2}}(\bm{\mu}^{k+1}).
\end{equation*}
Combining the above inclusions with the characterization \eqref{Lagform} of $\partial \ell$, we conclude that
\begin{equation*}
\big(\Delta^{k+1}-\tau_k\sigma_{k}^{-1}S(\bm{x}^{k+1}-\bm{x}^{k}), \;\sigma_{k}^{-1}(\bm{y}^{k}-\bm{y}^{k+1})\big) \in \partial \ell(\bm{x}^{k+1}, \bm{y}^{k+1}),
\end{equation*}
which is \eqref{gen-optcond1}. Finally, conditions \eqref{gen-optcond2} and \eqref{gen-optcond3} directly follow from the error criterion \eqref{rip2ALM-stopcrit} and the updating rules of $\bm{\lambda}^{k+1}$, $\bm{\mu}^{k+1}$, $\bm{w}^{k+1}$ in \eqref{rip2ALM_ywupdate}. This completes the proof.
\end{proof}

\section{Proof of Theorem \ref{thm:convergence-gen}}\label{sec:proof-conver-gen}

\begin{proof}
	\textit{Statement (i)}.
	Let $(\bm{x}^*,\bm{y}^*)\in\mathbb{R}^{N}\times\mathbb{R}^{M}$ be an arbitrary saddle point of $\ell$ and hence $(\bm{0},\bm{0})\in\partial\ell(\bm{x}^*,\bm{y}^*)$. For all $k\geq0$,
	\begin{equation*}
		\begin{aligned}
			\|\bm{y}^k-\bm{y}^*\|^2
			&= \|\bm{y}^k-\bm{y}^{k+1}+\bm{y}^{k+1}-\bm{y}^*\|^2 \\
			&= \|\bm{y}^k-\bm{y}^{k+1}\|^2 + 2\langle\bm{y}^k-\bm{y}^{k+1},\,\bm{y}^{k+1}-\bm{y}^*\rangle + \|\bm{y}^{k+1}-\bm{y}^*\|^2.
		\end{aligned}
	\end{equation*}
	By letting $\bm{u}^{k+1}:=\sigma_{k}^{-1}(\bm{y}^{k}-\bm{y}^{k+1})$, the above equation can be reformulated as
	\begin{equation}\label{gen-yyineq}
		\|\bm{y}^{k+1}-\bm{y}^*\|^2 = \|\bm{y}^k-\bm{y}^*\|^2 - 2\sigma_{k}\langle\bm{y}^{k+1}-\bm{y}^*,\,\bm{u}^{k+1}\rangle - \|\bm{y}^{k+1}-\bm{y}^{k}\|^2.
	\end{equation}
	Then, using the relation $\bm{w}^{k+1} = \bm{w}^k - \sigma_{k}\Delta^{k+1}$ (by \eqref{gen-optcond3}), we see that
	\begin{equation}\label{gen-wwineq}
		\begin{aligned}
			\|\bm{w}^{k+1}-\bm{x}^*\|^2
			= &\; \|\bm{w}^k-\sigma_{k}\Delta^{k+1}-\bm{x}^*\|^2  \\
			= &\; \|\bm{w}^k-\bm{x}^*\|^2 - 2\langle\bm{w}^k-\bm{x}^*,\,\sigma_{k}\Delta^{k+1}\rangle + \|\sigma_{k}\Delta^{k+1}\|^2 \\
			= &\; \|\bm{w}^k-\bm{x}^*\|^2 - 2\langle\bm{w}^k-\bm{x}^{k+1},\,\sigma_{k}\Delta^{k+1}\rangle + \|\sigma_{k}\Delta^{k+1}\|^2 \\
			&\; - 2\sigma_{k}\langle\bm{x}^{k+1}-\bm{x}^*,\,\bm{p}^{k+1}\rangle
			- 2\tau_k\langle\bm{x}^{k+1}-\bm{x}^*,\,S(\bm{x}^{k+1}-\bm{x}^{k})\rangle,
		\end{aligned}
	\end{equation}
	where $\bm{p}^{k+1}:=\Delta^{k+1}-\tau_k\sigma_{k}^{-1}S(\bm{x}^{k+1}-\bm{x}^{k})$. Similarly,
	\begin{equation}\label{gen-xxineq}
		\hspace{-1mm}
			\tau_k\|\bm{x}^{k+1}-\bm{x}^*\|^2_{S}
			= \tau_k\|\bm{x}^{k}-\bm{x}^*\|^2_{S}
			- \tau_k\|\bm{x}^{k+1}-\bm{x}^k\|^2_{S}
			+ 2\tau_k\langle\bm{x}^{k+1}-\bm{x}^*,\,S(\bm{x}^{k+1}-\bm{x}^{k})\rangle.
	\end{equation}
	By summing \eqref{gen-yyineq}, \eqref{gen-wwineq} and \eqref{gen-xxineq}, we have that
	\begin{equation}\label{gen-ywxineq}
		\begin{aligned}
			&\; \|\bm{y}^{k+1}-\bm{y}^*\|^2 + \|\bm{w}^{k+1}-\bm{x}^*\|^2 + \tau_k\|\bm{x}^{k+1}-\bm{x}^*\|^2_{S}  \\
			= &\; \|\bm{y}^{k}-\bm{y}^*\|^2 + \|\bm{w}^{k}-\bm{x}^*\|^2 + \tau_k\|\bm{x}^k-\bm{x}^*\|^2_{S}  \\
			&\; - 2\sigma_{k}\big(\langle\bm{x}^{k+1}-\bm{x}^*,\,\bm{p}^{k+1}\rangle
			+ \langle\bm{y}^{k+1}-\bm{y}^*,\,\bm{u}^{k+1}\rangle\big) \\
			&\; - 2\langle\bm{w}^k-\bm{x}^{k+1},\,\sigma_{k}\Delta^{k+1}\rangle
			+ \|\sigma_{k}\Delta^{k+1}\|^2
			- \|\bm{y}^{k+1}-\bm{y}^{k}\|^2
			- \tau_k\|\bm{x}^{k+1}-\bm{x}^k\|^2_{S}.
		\end{aligned}
	\end{equation}
	Note from \eqref{gen-optcond1} that
	\begin{equation}\label{gen-optcond1re}
		\big(\bm{p}^{k+1}, \,\bm{u}^{k+1}\big) \in \partial \ell(\bm{x}^{k+1}, \bm{y}^{k+1}),
	\end{equation}
	which, together with $(\bm{0},\bm{0})\in\partial\ell(\bm{x}^*,\bm{y}^*)$ and the monotonicity of $\partial\ell$, yields
	\begin{equation*}
		\langle\bm{x}^{k+1}-\bm{x}^*,\,\bm{p}^{k+1}\rangle
		+ \langle\bm{y}^{k+1}-\bm{y}^*,\,\bm{u}^{k+1}\rangle \geq 0.
	\end{equation*}
	Moreover, by using \eqref{gen-optcond2}, we see that
	\begin{equation*}\label{gen-errbdineq}
		\begin{aligned}
			- 2\langle\bm{w}^k-\bm{x}^{k+1},\,\sigma_{k}\Delta^{k+1}\rangle + \|\sigma_{k}\Delta^{k+1}\|^2
			&\leq 2\big|\langle\bm{w}^k-\bm{x}^{k+1},\,\sigma_{k}\Delta^{k+1}\rangle\big| + \|\sigma_{k}\Delta^{k+1}\|^2  \\
			&\leq \rho\big(\|\bm{y}^{k+1}-\bm{y}^{k}\|^2
			+ \tau_k\|\bm{x}^{k+1}-\bm{x}^k\|^2_{S}\big).
		\end{aligned}
	\end{equation*}
	Substituting the above two inequalities into \eqref{gen-ywxineq}, we obtain a key inequality for the subsequent convergence analysis:
	\begin{equation}\label{gen-recursion_of_xyw}
		\begin{aligned}
			&\; \|\bm{y}^{k+1}-\bm{y}^*\|^2 + \|\bm{w}^{k+1}-\bm{x}^*\|^2
			+ \tau_k\|\bm{x}^{k+1}-\bm{x}^*\|^2_{S} \\
			\leq &\; \|\bm{y}^{k}-\bm{y}^*\|^2 + \|\bm{w}^{k}-\bm{x}^*\|^2
			+ \tau_k\|\bm{x}^k-\bm{x}^*\|^2_{S} \\
			&\quad - (1-\rho)\left(\|\bm{y}^{k+1}-\bm{y}^{k}\|^2
			+ \tau_k\|\bm{x}^{k+1}-\bm{x}^k\|^2_{S}\right).
		\end{aligned}
	\end{equation}
	This, together with $\rho\in[0,1)$ and $\tau_{k+1}\leq(1+\nu_k)\tau_{k}$ for all $k \geq 0$, implies that
	\begin{equation}\label{gen-relaxed_monotone}
		\begin{aligned}
			&\; \|\bm{y}^{k+1}-\bm{y}^*\|^2 + \|\bm{w}^{k+1}-\bm{x}^*\|^2 + \tau_{k+1}\|\bm{x}^{k+1}-\bm{x}^*\|^2_{S}\\
			\leq &\; (1+\nu_{k})\left(\|\bm{y}^{k}-\bm{y}^*\|^2 + \|\bm{w}^{k}-\bm{x}^*\|^2 + \tau_k\|\bm{x}^k-\bm{x}^*\|^2_{S}\right).
		\end{aligned}
	\end{equation}
	Since $\{\nu_{k}\}$ is a non-negative summable sequence, it then follows from \cite[Lemma 2 in Chapter 2.2.1]{polyak1987introduction} that the sequence $\left\{\|\bm{y}^{k}-\bm{y}^{*}\|^2+\|\bm{w}^{k}-\bm{x}^{*}\|^{2} + \tau_k\|\bm{x}^k-\bm{x}^{*}\|^2_{S}\right\}$ is convergent. This, together with $\tau_{k}\geq\tau_{\min}>0$ and the positive definiteness of $S$, implies that all sequences $\{\bm{x}^k\}$, $\{\bm{w}^k\}$ and $\{\bm{y}^{k}\}$ are bounded.
	
	\vspace{2mm}
	\textit{Statement (ii)}. Using \eqref{gen-recursion_of_xyw} again with $\tau_{k+1}\leq(1+\nu_k)\tau_{k}$ with $\nu_k\geq0$ and $\sum\nu_i<\infty$ for all $k \geq 0$, we have that
	\begin{equation}\label{gen-addineq1}
		\begin{aligned}
			0 \leq &\; (1-\rho){(1+\nu_k)}\left(\|\bm{y}^{k+1}-\bm{y}^{k}\|^2 + \tau_k\|\bm{x}^{k+1}-\bm{x}^k\|^2_{S}\right) \\
			\leq &\; (1+\nu_{k})\left(\|\bm{y}^{k}-\bm{y}^*\|^2 + \|\bm{w}^{k}-\bm{x}^*\|^2 + \tau_k\|\bm{x}^k-\bm{x}^*\|^2_{S}\right) \\
			&\quad - \left(\|\bm{y}^{k+1}-\bm{y}^*\|^2 + \|\bm{w}^{k+1}-\bm{x}^*\|^2 + \tau_{k+1}\|\bm{x}^{k+1}-\bm{x}^*\|^2_{S}\right).
		\end{aligned}
	\end{equation}
	Since $\left\{\|\bm{y}^{k}-\bm{y}^{*}\|^2+\|\bm{w}^{k}-\bm{x}^{*}\|^{2}+\tau_k\|\bm{x}^k-\bm{x}^{*}\|^2_{S}\right\}$ is convergent, $\nu_{k}\to0$ (due to $\nu_k\geq0$ and $\sum\nu_i<\infty$) and $\rho\in[0,1)$, it then follows from \eqref{gen-addineq1} that
	\begin{equation}\label{gen-conv_succhg_xy}
		\lim\limits_{k\to\infty}~\|\bm{y}^{k+1}-\bm{y}^{k}\| = 0
		\quad \mathrm{and} \quad \lim\limits_{k\to\infty}~\tau_k\|\bm{x}^{k+1}-\bm{x}^k\| _{S}= 0.
	\end{equation}
	Note also that $\{\sigma_k\}$ is bounded away from 0, $\tau_k\geq\tau_{\min}>0$ and $S$ is positive definite. Thus, we further have that $\lim\limits_{k\to\infty}\bm{u}^{k+1} (:= \sigma_{k}^{-1}(\bm{y}^{k}-\bm{y}^{k+1}))=\bm{0}$ and $\lim\limits_{k\to\infty}\|\bm{x}^{k+1}-\bm{x}^k\|=0$. Moreover, using \eqref{gen-conv_succhg_xy} together with \eqref{gen-optcond2} implies that
	\begin{equation*}
		\lim\limits_{k\to\infty}~|\langle\bm{w}^k-\bm{x}^{k+1},\,\sigma_k\Delta^{k+1}\rangle|
		= 0
		\quad \mathrm{and} \quad
		\lim\limits_{k\to\infty}~\|\sigma_k\Delta^{k+1}\|^2 = 0.
	\end{equation*}
	Since $\{\sigma_{k}\}$ is bounded away from $0$, we then obtain that $\lim\limits_{k\to\infty}\langle\bm{w}^k-\bm{x}^{k+1},\,\Delta^{k+1}\rangle=0$ and $\lim\limits_{k\to\infty}\Delta^{k+1}=\bm{0}$. Finally, recall again that $\tau_{k+1}\leq (1+\nu_k)\tau_k$ with $\nu_k\geq0$ and $\sum\nu_i<\infty$ for all $k\geq0$. Thus, $\tau_{k}$ must be bounded from above and hence $\tau_{k}\sigma_{k}^{-1}$ is also bounded from above. Consequently, we can obtain that $\lim\limits_{k\to\infty}\tau_{k}\sigma_{k}^{-1}S(\bm{x}^{k+1}-\bm{x}^{k})=\bm{0}$ and hence $\lim\limits_{k\to\infty}\,\bm{p}^{k+1}\,(:=\Delta^{k+1}-\tau_k\sigma_{k}^{-1}S(\bm{x}^{k+1}-\bm{x}^{k}))=\bm{0}$.
	
	\vspace{1mm}
	\textit{Statement (iii)}. We first study the limit of $\{G\big(\bm{p}^{k+1}, \,\bm{y}^{k+1}\big)\}$. Using relations \eqref{gen-optcond1re} and \eqref{gen-subdiffrelation}, we have that $(\bm{x}^{k+1}, \bm{u}^{k+1})\in\partial G\big(\bm{p}^{k+1}, \,\bm{y}^{k+1}\big)$. Then, by the concavity of $G$, it holds that, for all $k\geq 0$,
	\begin{equation*}
		G(\bm{0},\,\bm{y}^*) \leq G\big(\bm{p}^{k+1}, \,\bm{y}^{k+1}\big)
		+ \langle\bm{x}^{k+1}, \,\bm{p}^{k+1}\rangle
		+ \langle\bm{u}^{k+1}, \,\bm{y}^{k+1}-\bm{y}^*\rangle.
	\end{equation*}
	Since $\lim\limits_{k\to\infty}\bm{p}^{k+1}=\bm{0}$, $\lim\limits_{k\to\infty}\bm{u}^{k+1} = \bm{0}$, and the sequences $\{\bm{x}^k\}$ and $\{\bm{y}^{k}\}$ are bounded, we can obtain from the above inequality that
	\begin{equation}\label{gen-liminfres}
		\liminf\limits_{k\to\infty}\,G\big(\bm{p}^{k+1}, \,\bm{y}^{k+1}\big)
		\geq G(\bm{0},\,\bm{y}^*).
	\end{equation}
	On the other hand, since $\{\bm{y}^k\}$ is bounded, it has at least one accumulation point. Suppose that $\bm{y}^{\infty}$ is an accumulation point and $\{\bm{y}^{k_i}\}$ is a convergent subsequence such that $\lim\limits_{i\to\infty}\bm{y}^{k_i} = \bm{y}^{\infty}$. Since $\lim\limits_{k\to\infty}\|\bm{y}^{k+1} - \bm{y}^{k}\| = 0$, we also have that $\lim\limits_{i\to\infty}\bm{y}^{k_{i}+1} = \bm{y}^{\infty}$. Thus, by passing to a further subsequence if necessary, we may assume without loss of generality that the subsequence  $\big\{G\big(\bm{p}^{k_{i}+1}, \,\bm{y}^{k_{i}+1}\big)\big\}$ satisfies
	\begin{equation*}
		\lim\limits_{i\to\infty}G\big(\bm{p}^{k_{i}+1},\bm{y}^{k_{i}+1}\big)
		=\limsup\limits_{k\to\infty}\,G\big(\bm{p}^{k+1}, \,\bm{y}^{k+1}\big).
	\end{equation*}
	Note that $G$ is closed upper semicontinuous concave (see, for example, \cite[Theorem 7]{r1974conjugate}), and thus $\mathrm{dom}\,G$ is closed. This, together with $(\bm{0},\,\bm{y}^{\infty})=\lim\limits_{i\to\infty}\big(\bm{p}^{k_{i}+1}, \,\bm{y}^{k_{i}+1}\big)$, induces that $(\bm{0},\,\bm{y}^{\infty})\in\mathrm{dom}\,G$. Then, we see that
	\begin{equation*}
		\begin{aligned}
			G(\bm{0},\,\bm{y}^*)
			&\geq G(\bm{0},\,\bm{y}^{\infty}) && \quad \mbox{(since $\bm{y}^*$ is optimal for dual problem \eqref{gen-paraobjdual})} \\
			&= G\bigg(\lim\limits_{i\to\infty}\bm{p}^{k_{i}+1},\,\lim\limits_{i\to\infty}\bm{y}^{k_{i}+1}\bigg) && \quad \mbox{(since $\lim\limits_{k\to\infty}\bm{p}^{k+1}=\bm{0}$ and $\lim\limits_{i\to\infty}\bm{y}^{k_{i}+1} = \bm{y}^{\infty}$)} \\
			&\geq \limsup\limits_{i\to\infty}\,G\big(\bm{p}^{k_{i}+1}, \,\bm{y}^{k_{i}+1}\big)
			&& \quad \mbox{(since $G$ is upper semicontinuous)} \\
			&= \limsup\limits_{k\to\infty}\,G\big(\bm{p}^{k+1}, \,\bm{y}^{k+1}\big).  && \quad \mbox{(by the choice of subsequence $\{\bm{y}^{k_{i}+1}\}$)}
		\end{aligned}
	\end{equation*}
	This, together with \eqref{gen-liminfres}, implies that
	\begin{equation*}
		\lim\limits_{k\to\infty}\,G\big(\bm{p}^{k+1}, \,\bm{y}^{k+1}\big)
		= G(\bm{0},\,\bm{y}^*).
	\end{equation*}
	
	We next study the limit of $\{F(\bm{x}^{k+1}, \bm{u}^{k+1})\}$. Since $-F$ and $G$ are convex conjugate and $\big(\bm{p}^{k+1},\,\bm{y}^{k+1}\big)\in\partial F(\bm{x}^{k+1},\,\bm{u}^{k+1})$, we can get the following equality by using the Fenchel equality (see, for example, \cite[Theorem 23.5]{r1970convex}):
	\begin{equation*}
		F(\bm{x}^{k+1}, \,\bm{u}^{k+1})
		= G\big(\bm{p}^{k+1}, \,\bm{y}^{k+1}\big)
		+ \langle\bm{p}^{k+1}, \,\bm{x}^{k+1}\rangle
		+ \langle\bm{y}^{k+1}, \,\bm{u}^{k+1}\rangle.
	\end{equation*}
	Since $\lim\limits_{k\to\infty}\bm{p}^{k+1}=\bm{0}$, $\lim\limits_{k\to\infty}\bm{u}^{k+1} = \bm{0}$, and $\{\bm{x}^k\}$ and $\{\bm{y}^{k}\}$ are bounded, we obtain that
	\begin{equation*}
		\lim\limits_{k\to\infty}\,F(\bm{x}^{k+1}, \bm{u}^{k+1})
		= G(\bm{0},\,\bm{y}^*) = F(\bm{x}^*,\,\bm{0}).
	\end{equation*}
	This proves statement (iii).
	
	\vspace{2mm}
	\textit{Statement (iv)}. We first prove that any accumulation point of $\{\bm{x}^k\}$ is an optimal solution of problem \eqref{gen-paraobjpri}. Since $\{\bm{x}^k\}$ is bounded by statement (i), the sequence $\{\bm{x}^k\}$ has at least one accumulation point. Suppose that $\bm{x}^{\infty}$ is an accumulation point and $\{\bm{x}^{k_j}\}$ is a convergent subsequence such that $\lim\limits_{j\to\infty}\bm{x}^{k_j} = \bm{x}^{\infty}$. Since $\lim\limits_{k\to\infty}\|\bm{x}^{k+1}-\bm{x}^{k}\| = 0$, we also have that $\lim\limits_{j\to\infty}\bm{x}^{k_j+1} = \bm{x}^{\infty}$. Then, using the fact that $F$ is lower semicontinuous and convex, and $\lim\limits_{k\to\infty}\bm{u}^{k+1}=\bm{0}$, we obtain that
	\begin{equation*}
		F(\bm{x}^{\infty},\bm{0})
		= F(\lim_{j\to\infty}\bm{x}^{k_j+1},\,\lim_{j\to\infty}\bm{u}^{k_{j}+1})
		\leq \liminf_{j\to\infty}~F(\bm{x}^{k_j+1},\,\bm{u}^{k_j+1})
		= F(\bm{x}^*,\bm{0}).
	\end{equation*}
	This implies that $\bm{x}^{\infty}$ is an optimal solution of problem \eqref{gen-paraobjpri}. Similarly, using the upper semicontinuity of $G$ and analogous manipulations, we can prove that any accumulation point of $\{\bm{y}^k\}$ is an optimal solution of problem \eqref{gen-paraobjdual}. This proves statement (iv).
	
	\vspace{2mm}
	\textit{Statement (v)}. We next prove that the whole sequence $\{\bm{y}^{k}\}$ is convergent. Define
	\begin{equation*}
		D_{\tau_{k}}\left((\bm{w}^k,\bm{x}^k), \,\mathcal{X}^*\right)
		:= \inf\limits_{\bm{x}^*\in\mathcal{X}^*}\left\{\|\bm{w}^{k}-\bm{x}^*\|^2
		+ \tau_{k}\|\bm{x}^{k}-\bm{x}^*\|^2_{S}\right\},
	\end{equation*}
	and let
	\begin{equation*}
		\phi := \liminf_{k\to\infty}\,D_{\tau_{k}}\left((\bm{w}^k,\bm{x}^k), \,\mathcal{X}^*\right),
	\end{equation*}
	where $\mathcal{X}^*$ is the solution set of primal problem \eqref{gen-paraobjpri}. Since $\{\bm{w}^k\}$, $\{\bm{x}^{k}\}$ and $\{\tau_{k}\}$ are bounded, we see that $0 \leq \phi < \infty$ and there exists a subsequence $\{(\bm{w}^{k_j},\bm{x}^{k_j},\tau_{k_j})\}$ such that
	\begin{equation*}
\lim_{j\to\infty}\,D_{\tau_{k_j}}\left((\bm{w}^{k_j},\bm{x}^{k_j}), \,\mathcal{X}^*\right) = \phi.
	\end{equation*}
	Then, by passing to a further subsequence if necessary, we may also assume without loss of generality that the subsequence $\{\bm{y}^{k_j}\}\subseteq\{\bm{y}^k\}$ converges to some accumulation point $\bm{y}^{\infty}$, which, in view of statement (iv), belongs to $\mathcal{Y}^*$ (the solution set of dual problem \eqref{gen-paraobjdual}). Thus, for such $\bm{y}^{\infty}$ and any $\bm{x}^*\in\mathcal{X}^*$, using \eqref{gen-relaxed_monotone} with some manipulations, we can obtain that, for all $k > k_j$,
	\begin{equation*}
		\begin{aligned}
			&\,\|\bm{y}^{k}-\bm{y}^{\infty}\|^2 + \|\bm{w}^{k}-\bm{x}^*\|^2
			+ \tau_{k}\|\bm{x}^{k}-\bm{x}^*\|^2_{S} \\
			\leq & \; {\textstyle\left(\prod_{i=k_j}^{k-1}\big(1+\nu_i\big)\right)}
			\Big(\|\bm{y}^{k_j}-\bm{y}^{\infty}\|^2 + \|\bm{w}^{k_j}-\bm{x}^*\|^2
			+ \tau_{k_j}\|\bm{x}^{k_{j}}-\bm{x}^*\|^2_{S}\Big).
		\end{aligned}
	\end{equation*}
	Since $0 \leq \phi \leq \liminf\limits_{k\to\infty}\left\{\|\bm{w}^{k}-\bm{x}^*\|^2 + \tau_{k}\|\bm{x}^{k}-\bm{x}^*\|^2_{S}\right\}$ for any $\bm{x}^*\in\mathcal{X}^*$, passing to the limit superior when $k \to \infty$ on the both sides of the above inequality, we obtain that, for any $\bm{x}^*\in\mathcal{X}^*$,
	\begin{equation*}
		\begin{aligned}
			&\; \phi + \limsup\limits_{k\to\infty}\left\{\|\bm{y}^{k}-\bm{y}^{\infty}\|^2\right\} \\
			\leq &\; \limsup\limits_{k\to\infty}\left\{\|\bm{y}^{k}-\bm{y}^{\infty}\|^2 + \|\bm{w}^{k}-\bm{x}^*\|^2 + \tau_{k}\|\bm{x}^{k}-\bm{x}^*\|^2_{S}\right\} \\
			\leq &\; {\textstyle\left(\prod_{i=k_j}^{\infty}(1+\nu_i)\right)}
			\left(\|\bm{y}^{k_j}-\bm{y}^{\infty}\|^2 + \|\bm{w}^{k_j}-\bm{x}^*\|^2
			+ \tau_{k_j}\|\bm{x}^{k_j}-\bm{x}^*\|^2_{S}\right), \quad \forall j \geq 0.
		\end{aligned}
	\end{equation*}
	Taking the infimum in $\bm{x}^*\in\mathcal{X}^*$ on the right-hand side of the last inequality, we have that
	\begin{equation*}
		\begin{aligned}		
			&\limsup\limits_{k\to\infty}\left\{\|\bm{y}^{k}-\bm{y}^{\infty}\|^2\right\} \\
			\leq &\; {\textstyle\left(\prod_{i=k_j}^{\infty}(1+\nu_i)\right)}
			\|\bm{y}^{k_j}-\bm{y}^{\infty}\|^2
			+ {\textstyle\left(\prod_{i=k_j}^{\infty}(1+\nu_i)\right)}
			D_{\tau_{k_j}}\left((\bm{w}^{k_j},\bm{x}^{k_j}), \,\mathcal{X}\right)
			- \phi, \quad \forall j \geq 0.
		\end{aligned}
	\end{equation*}
	Since $\ln\left(\prod_{i=k_{j}}^{\infty}(1+\nu_{i})\right) = \sum_{i=k_{j}}^{\infty}\ln(1+\nu_{i}) \leq \sum_{i=k_{j}}^{\infty}\nu_{i}$ and $\lim\limits_{j\to\infty}\sum_{i=k_{j}}^{\infty}\nu_{i} = 0$ (due to the summability of $\{\nu_k\}$), we see that $\lim\limits_{j\to\infty}\prod_{i=k_{j}}^{\infty}(1+\nu_{i}) = 1$. Using this fact, we can observe that the right-hand side of the above inequality converges to 0 as $j\to\infty$. Then, we conclude that $\lim\limits_{k\to\infty} \bm{y}^{k} = \bm{y}^{\infty}$, which completes the proof.
\end{proof}

\section{Proof of Theorem \ref{thm:Q-linear-rate-gen}}\label{sec:proof-Q-linear-rate-gen}

\begin{proof}
	\textit{Statement (i)}. For the sake of clarity, we will present our proof in three steps.
	
	\vspace{2mm}
	\textbf{Step I.} Let $(\bm{x}^*,\bm{y}^*)\in\mathbb{R}^N\times\mathbb{R}^M$ be an arbitrary saddle point of $\ell$. Similar to the proof of statement (i) in Theorem \ref{thm:convergence-gen}, we combine \eqref{gen-yyineq} with \eqref{gen-xxineq} to obtain that
	\begin{equation*}
		\begin{aligned}
			&\; \|\bm{y}^{k+1}-\bm{y}^*\|^2 + \tau_{k}\|\bm{x}^{k+1}-\bm{x}^*\|^2_{S}  \\
			= &\; \|\bm{y}^{k}-\bm{y}^*\|^2 + \tau_{k}\|\bm{x}^k-\bm{x}^*\|^2_{S}
			\; - 2\sigma_{k}\Big(\langle\bm{x}^{k+1}-\bm{x}^*,\,\bm{p}^{k+1}\rangle
			+ \langle\bm{y}^{k+1}-\bm{y}^{*},\,\bm{u}^{k+1}\rangle\Big) \\
			&\; + 2\sigma_{k}\langle\bm{x}^{k+1}-\bm{x}^{*}, \Delta^{k+1}\rangle
			- \|\bm{y}^{k+1}-\bm{y}^{k}\|^2 - \tau_{k}\|\bm{x}^{k+1}-\bm{x}^k\|^2_{S}.
		\end{aligned}
	\end{equation*}
	Since $(\bm{0},\bm{0})\in\partial\ell(\bm{x}^*,\bm{y}^*)$ and $\left(\bm{p}^{k+1}, \,\bm{u}^{k+1}\right) \in \partial \ell(\bm{x}^{k+1}, \bm{y}^{k+1})$, it then follows from the monotonicity of $\partial\ell$ that
	\begin{equation*}
		\langle\bm{x}^{k+1}-\bm{x}^*,\,\bm{p}^{k+1}\rangle
		+ \langle\bm{y}^{k+1}-\bm{y}^{*},\,\bm{u}^{k+1}\rangle \geq 0.
	\end{equation*}
	Thus, we conclude that
	\begin{equation}\label{gen-succ-xychg}
		\left\|\begin{matrix}
			\bm{x}^{k}-\bm{x}^{*} \\
			\bm{y}^{k} - \bm{y}^{*}
		\end{matrix}\right\|^{2}_{\Lambda_k}
		- \left\|\begin{matrix}
			\bm{x}^{k+1}-\bm{x}^{*} \\
			\bm{y}^{k+1} - \bm{y}^{*}
		\end{matrix} \right\|^{2}_{\Lambda_k}
		\geq \left\|\begin{matrix}
			\bm{x}^{k+1}-\bm{x}^{k} \\
			\bm{y}^{k+1} - \bm{y}^{k}
		\end{matrix} \right\|^2_{\Lambda_k}
		- 2\sigma_{k}\|\bm{x}^{k+1}-\bm{x}^{*}\|\|\Delta^{k+1}\|,
	\end{equation}
	where $\Lambda_k := \mathrm{Diag}(\tau_{k}S,I_{M})$. Define the sequences $\{\overline{\bm{x}}^{k}\}\subseteq\mathbb{R}^N$ and $\{\overline{\bm{y}}^{k}\}\subseteq\mathbb{R}^M$ as follows:
	\begin{equation*}\label{gen-def-of-proj}
		\overline{\bm{x}}^{k} := \Pi_{\mathcal{X}^*,S}(\bm{x}^{k})
		\quad \text{and} \quad
		\overline{\bm{y}}^{k} := \Pi_{\mathcal{Y}^*}(\bm{y}^{k}),\quad \forall\; k\geq 0,
	\end{equation*}
	where $\mathcal{X}^*$ is the solution set of primal problem \eqref{gen-paraobjpri} (i.e., problem \eqref{gen-mainprob}), $\mathcal{Y}^*$ is the solution set of dual problem \eqref{gen-paraobjdual}, $\Pi_{\mathcal{X}^*,S}(\bm{x}^{k})$ denotes the weighted projection of $\bm{x}^{k}$ onto the set $\mathcal{X}^*$, and $\Pi_{\mathcal{Y}^*}(\bm{y}^{k})$ denotes the Euclidean projection of $\bm{y}^k$ onto the set $\mathcal{Y}^*$. Since \eqref{gen-succ-xychg} holds for any $\bm{x}^{*}\in\mathcal{X}^*$ and $\bm{y}^{*}\in\mathcal{Y}^*$, we can replace $\bm{x}^{*}$ and $\bm{y}^{*}$ with $\overline{\bm{x}}^{k}$ and $\overline{\bm{y}}^{k}$, respectively, to obtain
	\begin{equation}\label{eq-recur-tmp}
		\left\|\begin{matrix}
			\bm{x}^{k}-\overline{\bm{x}}^{k} \\
			\bm{y}^{k} - \overline{\bm{y}}^{k}
		\end{matrix} \right\|^{2}_{\Lambda_k}
		-
		\left\|\begin{matrix}
			\bm{x}^{k+1}-\overline{\bm{x}}^{k} \\
			\bm{y}^{k+1} - \overline{\bm{y}}^{k}
		\end{matrix} \right\|^{2}_{\Lambda_k}
		\geq
		\left\|\begin{matrix}
			\bm{x}^{k+1}-\bm{x}^{k} \\
			\bm{y}^{k+1} - \bm{y}^{k}
		\end{matrix} \right\|^2_{\Lambda_k}
		- 2\sigma_{k}\|\bm{x}^{k+1}-\overline{\bm{x}}^{k}\|\|\Delta^{k+1}\|.
	\end{equation}
	Moreover, from the definitions of $\overline{\bm{x}}^{k}$ and $\overline{\bm{y}}^{k}$, together with $\tau_{k+1}\leq(1+\nu_{k})\tau_{k}$, we have that
	\begin{equation*}
		\tau_{k+1}\big\|\bm{x}^{k+1} - \overline{\bm{x}}^{k+1}\big\|^2_{S}
		\leq \tau_{k+1}\big\|\bm{x}^{k+1} - \overline{\bm{x}}^{k}\big\|^2_{S}
		\leq (1+\nu_k)\tau_{k}\big\|\bm{x}^{k+1} - \overline{\bm{x}}^{k}\big\|^2_{S},
	\end{equation*}
	and
	\begin{equation*}
		\big\|\bm{y}^{k+1} - \overline{\bm{y}}^{k+1}\big\|^2
		\leq \big\|\bm{y}^{k+1} - \overline{\bm{y}}^{k}\big\|^2,
	\end{equation*}
	which further imply that
	\begin{equation*}
		\frac{1}{1+\nu_k}\left\|\begin{matrix}
			\bm{x}^{k+1}-\overline{\bm{x}}^{k+1} \\
			\bm{y}^{k+1} - \overline{\bm{y}}^{k+1}
		\end{matrix} \right\|^{2}_{\Lambda_{k+1}}
		\leq
		\left\|\begin{matrix}
			\bm{x}^{k+1}-\overline{\bm{x}}^{k} \\
			\bm{y}^{k+1} - \overline{\bm{y}}^{k}
		\end{matrix} \right\|^{2}_{\Lambda_k}.
	\end{equation*}
	These, together with \eqref{eq-recur-tmp}, yield that
	\begin{equation}\label{gen-succ-xychgbar}
		\begin{aligned}
			&\; \left\|\begin{matrix}
				\bm{x}^{k}-\overline{\bm{x}}^{k} \\
				\bm{y}^{k} - \overline{\bm{y}}^{k}
			\end{matrix} \right\|^{2}_{\Lambda_k}
			-
			\frac{1}{1+\nu_k}\left\|\begin{matrix}
				\bm{x}^{k+1}-\overline{\bm{x}}^{k+1} \\
				\bm{y}^{k+1} - \overline{\bm{y}}^{k+1}
			\end{matrix} \right\|^{2}_{\Lambda_{k+1}}   \\
			\geq &\; \left\|\begin{matrix}
				\bm{x}^{k+1}-\bm{x}^{k} \\
				\bm{y}^{k+1} - \bm{y}^{k}
			\end{matrix} \right\|^2_{\Lambda_k}
			- 2\sigma_{k}\|\bm{x}^{k+1}-\overline{\bm{x}}^{k}\|\|\Delta^{k+1}\|.
		\end{aligned}
	\end{equation}
	
	\vspace{2mm}
	\textbf{Step II.} We next derive an upper bound for $\|\bm{x}^{k+1}-\overline{\bm{x}}^{k}\|\|\Delta^{k+1}\|$. On the one hand, we have from \eqref{gen-optcond2} that
	\begin{equation*}
		\sigma_{k}^{2}\|\Delta^{k+1}\|^2
		\leq \rho\big(\|\bm{y}^{k+1}-\bm{y}^{k}\|^2
		+ \tau_{k}\|\bm{x}^{k+1}-\bm{x}^k\|^2_{S}\big),
	\end{equation*}
	which implies that
	\begin{equation}\label{gen-delta-upbd-xy}
		\|\Delta^{k+1}\|
		\leq \frac{\sqrt{\rho}}{\sigma_{k}}
		\left\|\begin{matrix}
			\bm{x}^{k+1}-\bm{x}^{k} \\
			\bm{y}^{k+1} - \bm{y}^{k}
		\end{matrix}\right\|_{\Lambda_k}.
	\end{equation}
	On the other hand, we see that
	\begin{equation}\label{gen-upbd-xxbark}
		\begin{aligned}
			\|\bm{x}^{k+1}-\overline{\bm{x}}^{k}\| & \leq \frac{1}{\sqrt{\lambda_{\min}(S)}}\|\bm{x}^{k+1}-\overline{\bm{x}}^{k}\|_{S} \\
			& \leq \frac{1}{\sqrt{\lambda_{\min}(S)}} \left(\|\bm{x}^{k+1}-\overline{\bm{x}}^{k+1}\|_{S}
			+ \|\overline{\bm{x}}^{k+1}-\overline{\bm{x}}^{k}\|_{S}\right) \\
			& \leq \frac{1}{\sqrt{\lambda_{\min}(S)}} \left(\|\bm{x}^{k+1}-\overline{\bm{x}}^{k+1}\|_{S}
			+ \|\bm{x}^{k+1}-{\bm{x}}^{k}\|_{S}\right) \\
			& \leq \frac{1}{\sqrt{\lambda_{\min}(S)}} \left(\|\bm{x}^{k+1}-\overline{\bm{x}}^{k+1}\|_{S}
			+ \frac{1}{\sqrt{\tau_{k}}}\left\|\begin{matrix}\bm{x}^{k+1}-{\bm{x}}^{k} \\
				\bm{y}^{k+1} - {\bm{y}}^{k}
			\end{matrix}\right\|_{\Lambda_k}\right),
		\end{aligned}
	\end{equation}
	where $\lambda_{\min}(S)$ is the smallest eigenvalue of $S$, and the third inequality follows from the non-expansiveness of the weighted projection operator $\Pi_{\mathcal{X}^*,S}(\cdot)$. Moreover, since $\{\bm{x}^{k}\}$ and $\{\bm{y}^{k}\}$ are bounded (by Theorem \ref{thm:convergence-gen}(i)), there must exists a positive scalar $r$ such that
	\begin{equation*} \mathrm{dist}\big((\bm{x}^{k},\bm{y}^{k}),\,(\partial\ell)^{-1}(\bm{0},\bm{0})\big)\leq r, \quad \forall \,k \geq 0.
	\end{equation*}
	Thus, we apply Assumption \ref{asp:error-bound-Li-gen} with this $r$ and know that, there exists a $\kappa>0$ such that
	\begin{equation}\label{eq:error-bound-xyk}
		\mathrm{dist}\big((\bm{x}^{k+1},\bm{y}^{k+1}), \,\mathcal{X}^*\times\mathcal{Y}^*\big)
		\leq \kappa\,\mathrm{dist}\big((\bm{0},\bm{0}), \,\partial\ell(\bm{x}^{k+1},\bm{y}^{k+1})\big)
		\leq \kappa\big\|(\bm{p}^{k+1}, \,\bm{u}^{k+1})\big\|,
	\end{equation}
	where the last inequality is due to \eqref{gen-optcond1re} (i.e., $(\bm{p}^{k+1}, \,\bm{u}^{k+1})\in\partial\ell(\bm{x}^{k+1}, \bm{y}^{k+1})$).  For any $k\geq0$, let $\widetilde{\bm{x}}^{k} := \Pi_{\mathcal{X}^*}(\bm{x}^{k})$ be the Euclidean projection of $\bm{x}^{k}$ onto the set $\mathcal{X}^{*}$.
	Then, using \eqref{eq:error-bound-xyk} with $\bm{p}^{k+1} := \Delta^{k+1}-\tau_{k}\sigma_{k}^{-1}S(\bm{x}^{k+1}-\bm{x}^{k})$ and $\bm{u}^{k+1} := \sigma_{k}^{-1}(\bm{y}^{k}-\bm{y}^{k+1})$, we see that
	\begin{equation}\label{gen-upbd-xdist}
		\begin{aligned}
			& \; \frac{1}{\sqrt{\lambda_{\max}(S)}}\|\bm{x}^{k+1} - \overline{\bm{x}}^{k+1}\|_{S}
            \leq 
            \frac{1}{\sqrt{\lambda_{\max}(S)}}\|\bm{x}^{k+1} - \widetilde{\bm{x}}^{k+1}\|_{S}
            \\ 
			\leq &\; \|\bm{x}^{k+1} - \widetilde{\bm{x}}^{k+1}\|
			\leq \; \sqrt{\|\bm{x}^{k+1} - \widetilde{\bm{x}}^{k+1}\|^2
				+ \|\bm{y}^{k+1} - \overline{\bm{y}}^{k+1}\|^2} \\
			\leq &\; \kappa\left\|\begin{matrix}
				\Delta^{k+1} - \tau_{k}\sigma_{k}^{-1}S(\bm{x}^{k+1} - \bm{x}^{k}) \\
				\sigma_{k}^{-1}(\bm{y}^{k} - \bm{y}^{k+1})
			\end{matrix}\right\|
			\;\leq \; \kappa\left(\|\Delta^{k+1}\|
			+ \frac{1}{\sigma_{k}}\left\|\begin{matrix}
				\tau_{k}S(\bm{x}^{k+1} - \bm{x}^{k}) \\
				\bm{y}^{k+1} - \bm{y}^{k}
			\end{matrix}\right\|\right)  \\
			\leq &\; \kappa\left(\|\Delta^{k+1}\|
			+ \frac{\sqrt{\overline{\tau}_{k}}}{\sigma_k}
			\left\|\begin{matrix}
				\bm{x}^{k+1} - \bm{x}^{k} \\
				\bm{y}^{k+1} - \bm{y}^{k}
			\end{matrix}\right\|_{\Lambda_k}\right)
			\leq \frac{\kappa\left(\sqrt{\rho}+\sqrt{\overline{\tau}_{k}}\right)}{\sigma_{k}}
			\left\|\begin{matrix}
				\bm{x}^{k+1}-\bm{x}^{k} \\
				\bm{y}^{k+1} - \bm{y}^{k}
			\end{matrix}\right\|_{\Lambda_k},
		\end{aligned}
	\end{equation}
	where $\overline{\tau}_{k}:=\max\left\{1, \tau_{k}\lambda_{\max}(S)\right\}$, and $\lambda_{\max}(S)$ is the largest eigenvalue of $S$. Note that the first inequality follows from the minimization property of $\overline{\bm{x}}^{k+1}$,
	and the last inequality follows from \eqref{gen-delta-upbd-xy}. 
    
    Now, combining \eqref{gen-delta-upbd-xy}, \eqref{gen-upbd-xxbark} and \eqref{gen-upbd-xdist}, with some manipulations, we can obtain that
	\begin{equation*}
		\begin{aligned}
			& \; \|\bm{x}^{k+1}-\overline{\bm{x}}^{k}\|\|\Delta^{k+1}\| \\
			\overset{\eqref{gen-upbd-xxbark}}{\leq} & \;  \frac{1}{\sqrt{\lambda_{\min}(S)}}\left(\|\bm{x}^{k+1}-\overline{\bm{x}}^{k+1}\|_{S}
			+ \frac{1}{\sqrt{\tau_{k}}}\left\|\begin{matrix}\bm{x}^{k+1}-{\bm{x}}^{k} \\
				\bm{y}^{k+1} - {\bm{y}}^{k}
			\end{matrix}\right\|_{\Lambda_k}\right)\|\Delta^{k+1}\| \\
			\overset{\eqref{gen-upbd-xdist}}{\leq} & \;  \frac{1}{\sqrt{\lambda_{\min}(S)}}\left(\frac{\kappa\sqrt{\lambda_{\max}(S)}\left(\sqrt{\rho}+\sqrt{\overline{\tau}_{k}}\right)}{\sigma_{k}}
			\left\|\begin{matrix}
				\bm{x}^{k+1}-\bm{x}^{k} \\
				\bm{y}^{k+1} - \bm{y}^{k}
			\end{matrix}\right\|_{\Lambda_k}
			+ \frac{1}{\sqrt{\tau_{k}}}\left\|\begin{matrix}\bm{x}^{k+1}-{\bm{x}}^{k} \\
				\bm{y}^{k+1} - {\bm{y}}^{k}
			\end{matrix}\right\|_{\Lambda_k}\right)\|\Delta^{k+1}\| \\
			= & \; \frac{1}{\sqrt{\lambda_{\min}(S)}}\left(\frac{\kappa\sqrt{\tau_{k}\lambda_{\max}(S)}\left(\sqrt{\rho}+\sqrt{\overline{\tau}_{k}}\right) + \sigma_{k}}{\sigma_{k}\sqrt{\tau_{k}}}
			\left\|\begin{matrix}
				\bm{x}^{k+1}-\bm{x}^{k} \\
				\bm{y}^{k+1} - \bm{y}^{k}
			\end{matrix}\right\|_{\Lambda_k}\right)\|\Delta^{k+1}\| \\
			\overset{\eqref{gen-delta-upbd-xy}}{\leq} & \; \frac{1}{\sqrt{\lambda_{\min}(S)}}\left(\frac{\kappa\sqrt{\tau_{k}\lambda_{\max}(S)}
				\left(\rho +\sqrt{\rho\overline{\tau}_{k}}\right)
				+ \sigma_{k}\sqrt{\rho}}{\sigma_{k}^{2}\sqrt{\tau_{k}}}\right)
			\left\|\begin{matrix}
				\bm{x}^{k+1}-\bm{x}^{k} \\
				\bm{y}^{k+1} - \bm{y}^{k}
			\end{matrix}\right\|_{\Lambda_k}^{2}.
		\end{aligned}
	\end{equation*}
	Then, substituting this inequality into \eqref{gen-succ-xychgbar} yields that
	\begin{equation}\label{eq:dist-xybark}
		\begin{aligned}
			&\; \left\|\begin{matrix}
				\bm{x}^{k}-\overline{\bm{x}}^{k} \\
				\bm{y}^{k} - \overline{\bm{y}}^{k}
			\end{matrix} \right\|^{2}_{\Lambda_k}
			-
			\frac{1}{1+\nu_k}\left\|\begin{matrix}
				\bm{x}^{k+1}-\overline{\bm{x}}^{k+1} \\
				\bm{y}^{k+1} - \overline{\bm{y}}^{k+1}
			\end{matrix} \right\|^{2}_{\Lambda_{k+1}} \\
			\geq &\; \left(1 - \frac{2\kappa\sqrt{\tau_{k}\lambda_{\max}(S)}
				\left(\rho +\sqrt{\rho\overline{\tau}_{k}}\right)
				+ 2\sigma_{k}\sqrt{\rho}}{\sigma_{k}\sqrt{\tau_{k}\lambda_{\min}(S)}}\right)
			\left\|\begin{matrix}
				\bm{x}^{k+1}-\bm{x}^{k} \\
				\bm{y}^{k+1} - \bm{y}^{k}
			\end{matrix}\right\|_{\Lambda_k}^{2}.
		\end{aligned}
	\end{equation}
	
	\vspace{2mm}
	\textbf{Step III.} In the following, we will establish the asymptotic convergence rate based on \eqref{eq:dist-xybark}. First, by recalling the conditions on $\{\tau_{k}\}$: $\tau_{k}\geq\tau_{\min}>0$ and $\tau_{k+1}\leq(1+\nu_{k})\tau_{k}$ with $\nu_{k}\geq0$ and $\sum_{k=0}^{\infty}\nu_{k}<+\infty$, we know that there exists $\tau_{\max}:=\tau_{0}\prod_{k=0}^{\infty}(1+\nu_{k})$ such that $0<\tau_{\min} \leq \tau_{k} \leq \tau_{\max} < +\infty$ for all $k\geq 0$. This together with condition \eqref{para-conds} implies that there exists a positive integer $k_{0}$ such that
	\begin{equation*}
		\sqrt{\tau_{k}\lambda_{\min}(S)} - 2\sqrt{\rho} > 0
		\quad \text{and} \quad
		\sigma_{k} > c\cdot\frac{2\kappa\sqrt{\tau_{k}\lambda_{\max}(S)}\left(\rho+\sqrt{\rho\overline{\tau}_{k}}\right)}{\sqrt{\tau_{k}\lambda_{\min}(S)} - 2\sqrt{\rho}}, \quad \forall\,k \geq k_{0},
	\end{equation*}
	where $c>1$. Hence, one can verify that the following inequality holds for all $k\geq k_0$:
	\begin{equation}\label{para-condsre}
		\begin{aligned}
			& \left(1 - \frac{2\kappa\sqrt{\tau_{k}\lambda_{\max}(S)}
				\left(\rho +\sqrt{\rho\overline{\tau}_{k}}\right)
				+ 2\sigma_{k}\sqrt{\rho}}{\sigma_{k}\sqrt{\tau_{k}\lambda_{\min}(S)}}\right)
			>  \widetilde{c}:=\frac{c-1}{c}\cdot\frac{\sqrt{\tau_{\min}\lambda_{\min}(S)} - 2\sqrt{\rho}}{\sqrt{\tau_{\min}\lambda_{\min}(S)}}
			> 0,
		\end{aligned}
	\end{equation}
	which means that the factor in the right-hand side of \eqref{eq:dist-xybark} will be \textit{positive} when $k\geq k_{0}$.
	
	On the other hand, using \eqref{gen-upbd-xdist} again, we deduce that
	\begin{equation*}
		\begin{aligned}
			\left\|\begin{matrix}
				\bm{x}^{k+1}-\bm{x}^{k} \\
				\bm{y}^{k+1} - \bm{y}^{k}
			\end{matrix}\right\|_{\Lambda_k}^{2}
			&\geq \frac{\sigma_{k}^{2}}{\kappa^2\left(\sqrt{\rho}+\sqrt{\overline{\tau}_{k}}\right)^2}
			\left\|\begin{matrix}
				\bm{x}^{k+1}-\widetilde{\bm{x}}^{k+1} \\
				\bm{y}^{k+1} - \overline{\bm{y}}^{k+1}
			\end{matrix}\right\|^{2}  \\
			&\geq \frac{\sigma_{k}^{2}}{\kappa^2\left(\sqrt{\rho}+\sqrt{\overline{\tau}_{k}}\right)^2\overline{\tau}_{k}}
			\left\|\begin{matrix}
				\bm{x}^{k+1}-\overline{\bm{x}}^{k+1} \\
				\bm{y}^{k+1} - \overline{\bm{y}}^{k+1}
			\end{matrix}\right\|_{\Lambda_k}^{2} \\
			&\geq \frac{1}{1+\nu_{k}}\cdot\frac{\sigma_{k}^{2}}{\kappa^2\left(\sqrt{\rho}+\sqrt{\overline{\tau}_{k}}\right)^2\overline{\tau}_{k}}
			\left\|\begin{matrix}
				\bm{x}^{k+1}-\overline{\bm{x}}^{k+1} \\
				\bm{y}^{k+1} - \overline{\bm{y}}^{k+1}
			\end{matrix}\right\|_{\Lambda_{k+1}}^{2}.
		\end{aligned}
	\end{equation*}
	This, together with \eqref{eq:dist-xybark} and \eqref{para-condsre}, yields that
	\begin{equation}\label{eq:linear_rate1}
		\left\|\begin{matrix}
			\bm{x}^{k}-\overline{\bm{x}}^{k} \\
			\bm{y}^{k} - \overline{\bm{y}}^{k}
		\end{matrix}\right\|_{\Lambda_k}^{2}
		\geq \left(\frac{1 + \gamma_{k}}{1 + \nu_{k}}\right)\left\|\begin{matrix}
			\bm{x}^{k+1}-\overline{\bm{x}}^{k+1} \\
			\bm{y}^{k+1} - \overline{\bm{y}}^{k+1}
		\end{matrix} \right\|_{\Lambda_{k+1}}^{2}, \quad \forall \,k \geq k_{0},
	\end{equation}
	where
	\begin{equation*}
		\begin{aligned}
			\gamma_{k} \,:=& \, \left(1 - \frac{2\kappa\sqrt{\tau_{k}\lambda_{\max}(S)}
				\left(\rho +\sqrt{\rho\overline{\tau}_{k}}\right)
				+ 2\sigma_{k}\sqrt{\rho}}{\sigma_{k}\sqrt{\tau_{k}\lambda_{\min}(S)}}\right) \frac{\sigma_{k}^{2}}{\kappa^2\left(\sqrt{\rho}+\sqrt{\overline{\tau}_{k}}\right)^2\overline{\tau}_{k}} \\
			\, \geq & \, \widetilde{c}\cdot\frac{\sigma_{k}^{2}}{\kappa^2\left(\sqrt{\rho}+\sqrt{\overline{\tau}_{\max}}\right)^2\overline{\tau}_{\max}}
			\geq \gamma_{\min}:= \frac{\widetilde{c}\,\sigma_{\min}^{2}}{\kappa^2\left(\sqrt{\rho}+\sqrt{\overline{\tau}_{\max}}\right)^2\overline{\tau}_{\max}} > 0, \quad \forall\,k \geq k_{0},
		\end{aligned}
	\end{equation*}
	and $\overline{\tau}_{\max}:=\max\left\{1,\tau_{\max}\lambda_{\max}(S)\right\}$. Then, one can readily obtain from \eqref{eq:linear_rate1} that
	\begin{equation*}
		\mathrm{dist}_{\Lambda_{k+1}}\left((\bm{x}^{k+1},\bm{y}^{k+1}), \,(\partial\ell)^{-1}(\bm{0},\bm{0})\right) \leq \mu_{k}\, \mathrm{dist}_{\Lambda_k}\left((\bm{x}^{k},\bm{y}^{k}), \,(\partial\ell)^{-1}(\bm{0},\bm{0})\right),
	\end{equation*}
	where $\mu_{k}:=\sqrt{\frac{1+\nu_{k}}{1+\gamma_{k}}}$. Since $\nu_{k}\to0$ and $\gamma_{k}\geq\gamma_{\min}> 0$ for all $k \geq k_{0}$, one can verify that $\limsup\limits_{k\to\infty}\,\{\mu_{k}\}<1$. Thus, we obtain the desired results in statement (i).
	
	\vspace{2mm}
	\textit{Statement (ii)}. Using \eqref{eq:dist-xybark} and \eqref{para-condsre} again, we see that
	\begin{equation*}
		\begin{aligned}
			\widetilde{c}\,\tau_{\min}\lambda_{\min}(S)\|\bm{x}^{k+1}-{\bm{x}}^{k}\|^2
			& \; \leq \widetilde{c}\tau_{\min}\|\bm{x}^{k+1}-{\bm{x}}^{k}\|^2_{S} \\
			& \; \leq
			\widetilde{c}\left\|\begin{matrix}
				\bm{x}^{k+1}-{\bm{x}}^{k} \\
				\bm{y}^{k+1} - {\bm{y}}^{k}
			\end{matrix}\right\|_{\Lambda_k}^{2}
			\leq \mathrm{dist}_{\Lambda^{k}}^{2}\left((\bm{x}^{k},\bm{y}^{k}), \,(\partial\ell)^{-1}(\bm{0},\bm{0})\right),
		\end{aligned}
	\end{equation*}
	holds for any $ k \geq k_{0}$. Using this inequality and the fact that $\left\{\mathrm{dist}_{\Lambda^{k}}\left((\bm{x}^{k},\bm{y}^{k}), \,(\partial\ell)^{-1}(\bm{0},\bm{0})\right)\right\}$ is asymptotic Q-(super)linear convergent, we can conclude that there exist a positive integer $k_1$, $0<\beta<1$ and $C>0$ such that
	\begin{equation*}
		\|\bm{x}^{k+1}-{\bm{x}}^{k}\| \leq C \beta^{k}, \quad \forall \,k \geq k_{1},
	\end{equation*}
	which further implies that $\sum_{k=0}^{\infty}\|\bm{x}^{k+1}-{\bm{x}}^{k}\|<\infty$. Consequently, $\{\bm{x}^k\}$ is a Cauchy sequence and hence convergent. Therefore, the proof is completed.
\end{proof}

\section{Proof of Theorem \ref{thm:ergo-rate}}\label{sec:proof-ergo-rate}

\begin{proof}
	First, it follows from the definition of $\widehat{\bm{x}}^k$ and the updating rule of $\bm{\lambda}^{k+1}$ in \eqref{rip2ALM_ywupdate} that
	\begin{equation}\label{eq-vio}
		\begin{aligned}
			\|A\widehat{\bm{x}}^k-\bm{b}\|
			&\,=\,\left\|A\left(\frac{\sum_{i=0}^{k-1} \sigma_i\bm{x}^{i+1}}{\sum_{i=0}^{k-1}\sigma_{i}}\right)-\bm{b}\right\|
			\,=\,\frac{1}{\sum_{i=0}^{k-1}\sigma_{i}}\left\|
			\sum_{i=0}^{k-1}\sigma_{i}\left(A\bm{x}^{i+1}-\bm{b}\right)\right\|  \\
			&\,=\,\frac{1}{\sum_{i=0}^{k-1}\sigma_{i}}\left\|
			\sum_{i=0}^{k-1}\left(\bm{\lambda}^{i+1}-\bm{\lambda}^{i}\right)\right\|
			\,=\,\frac{1}{\sum_{i=0}^{k-1}\sigma_{i}}\left\|\bm{\lambda}^{k}-\bm{\lambda}^{0}\right\|.
		\end{aligned}
	\end{equation}
	Next, note that $g(\bm{x}) = \big(g_1(\bm{x}), \cdots, g_{m_{2}}(\bm{x})\big)$ and each $g_j:\mathbb{R}^{N}\to\mathbb{R}$ ($j=1,\cdots,m_2$) is convex. Then, for an arbitrary index $j\in\{1,\cdots,m_2\}$, it follows from the convexity of $g_{j}$ and the updating rule of $\bm{\mu}^{k+1}$ in \eqref{rip2ALM_ywupdate} that
	\begin{equation*}
		\begin{aligned}
			g_{j}(\widehat{\bm{x}}^k) & \, = g_{j}\left(\frac{\sum_{i=0}^{k-1} \sigma_{i}\bm{x}^{i+1}}{\sum_{i=0}^{k-1}\sigma_{i}}\right) \leq \frac{\sum_{i=0}^{k-1} \sigma_{i}g_{j}(\bm{x}^{i+1})}{\sum_{i=0}^{k-1}\sigma_{i}} = \frac{\sum_{i=0}^{k-1} \big(\mu_{j}^{i}+\sigma_{i}g_{j}(\bm{x}^{i+1})-\mu_{j}^{i}\big)}{\sum_{i=0}^{k-1}\sigma_{i}} \\[5pt]
			& \, \leq \frac{\sum_{i=0}^{k-1} \big( \max\{0, \mu_{j}^{i}+\sigma_{i}g_{j}(\bm{x}^{i+1})\} - \mu_{j}^{i} \big)}{\sum_{i=0}^{k-1}\sigma_{i}} = \frac{\sum_{i=0}^{k-1} \big(\mu_{j}^{i+1}-\mu_{j}^{i}\big)}{\sum_{i=0}^{k-1}\sigma_{i}} = \frac{\mu_{j}^{k}-\mu_{j}^{0}}{\sum_{i=0}^{k-1}\sigma_{i}},
		\end{aligned}
	\end{equation*}
	where $\mu_{j}^{i}$ denotes the $j$-th element of $\bm{\mu}^{i}$.
	This inequality further yields that
	\begin{equation*}
		\max\big\{0,\,g_{j}(\widehat{\bm{x}}^k)\big\} \leq \; \frac{1}{\sum_{i=0}^{k-1}\sigma_{i}}\big|\mu_{j}^{k}-\mu_{j}^{0}\big|, \quad \forall \, j = 1,\cdots, m_2,
	\end{equation*}
	which implies that
	\begin{equation}\label{ineq-vio}
		\left\|\max\left\{\bm{0},\,g(\widehat{\bm{x}}^k)\right\}\right\|
		\leq \frac{1}{\sum_{i=0}^{k-1}\sigma_{i}}\left\|\bm{\mu}^{k} - \bm{\mu}^{0}\right\|.
	\end{equation}
	Thus, combining \eqref{eq-vio} and \eqref{ineq-vio}, we obtain that
	\begin{equation*}
		\begin{Vmatrix}
			A \widehat{\bm{x}}^k-\bm{b} \\
			\max\left\{\bm{0},\,g(\widehat{\bm{x}}^k)\right\}
		\end{Vmatrix}
		\leq \frac{1}{\sum_{i=0}^{k-1}\sigma_{i}}\left\|\bm{y}^{k}-\bm{y}^{0}\right\|
		\leq \frac{2B_y}{\sum_{i=0}^{k-1}\sigma_{i}}.
	\end{equation*}
	This proves inequality \eqref{primfeas-vio}.
	
	We now proceed to prove the left-hand side of inequality \eqref{primobj-gap}. Since $ (\bm{x}^{*},\,\bm{y}^*)$, with $\bm{y}^{*}:=(\bm{\lambda}^{*},\bm{\mu}^{*})$, is a saddle point to $\ell$, it is known that (see, e.g., \cite[Theorem 11.59]{rw1998variational})
	\begin{equation*}
		f(\bm{x}^{*}) = \min_{\bm{x}} ~ \mathcal{L}_{\sigma_{k}}(\bm{x}, \bm{\lambda}^*, \bm{\mu}^*), \quad \forall\,\sigma_{k}>0.
	\end{equation*}
	Therefore, for any $\bm{x}^{\prime}\in\mathbb{R}^{N}$, we have
	\begin{align}
		f(\bm{x}^{*})
		= & \; \min_{\bm{x}} ~ \mathcal{L}_{\sigma_{k}}(\bm{x}, \bm{\lambda}^*, \bm{\mu}^*) \nonumber \\
		\leq & \; f(\bm{x}') + \langle\bm{\lambda}^*, A\bm{x}'-\bm{b}\rangle
		+ \frac{\sigma_k}{2}\|A\bm{x}'-\bm{b}\|^{2}
		+ \frac{1}{2\sigma_k}\big\|\max\{\bm{0},\bm{\mu}^*+\sigma_kg(\bm{x}')\}\big\|^{2}
		- \frac{1}{2\sigma_k}\|\bm{\mu}^*\|^{2} \nonumber \\
		\leq & \; f(\bm{x}') + \langle\bm{\lambda}^*, A\bm{x}'-\bm{b}\rangle
		+ \frac{\sigma_{k}}{2}\|A\bm{x}'-\bm{b}\|^{2}
		+ \frac{1}{2\sigma_{k}}\big\|\bm{\mu}^*+\max\{\bm{0},\sigma_{k} g(\bm{x}^{\prime})\}\big\|^{2}-\frac{1}{2\sigma_{k}}\|\bm{\mu}^*\|^{2} \nonumber \\
		= & \; f(\bm{x}^{\prime}) + \Bigg\langle\begin{bmatrix}
			\bm{\lambda}^{*} \\
			\bm{\mu}^{*}
		\end{bmatrix},
		\begin{bmatrix}
			A\bm{x}^{\prime}-\bm{b} \\ \max\{\bm{0}, g(\bm{x}^{\prime})\}
		\end{bmatrix}\Bigg\rangle + \frac{\sigma_{k}}{2}\begin{Vmatrix}
			A\bm{x}^{\prime}-\bm{b} \\ \max\{\bm{0}, g(\bm{x}^{\prime})\}
		\end{Vmatrix}^{2} \nonumber  \\
		\leq & \; f(\bm{x}^{\prime}) + \|\bm{y}^{*}\| \begin{Vmatrix}
			A\bm{x}^{\prime}-\bm{b} \\ \max\{\bm{0}, g(\bm{x}^{\prime})\}
		\end{Vmatrix} + \frac{\sigma_{k}}{2}\begin{Vmatrix}
			A\bm{x}^{\prime}-\bm{b} \\ \max\{\bm{0}, g(\bm{x}^{\prime})\}
		\end{Vmatrix}^{2}, \label{primobj-gap-left1}
	\end{align}
	where the second inequality follows from the fact that $\big\|\max\{\bm{0},\bm{\mu}^*+\sigma_{k} g(\bm{x}^{\prime})\}\big\|^{2}\leq\big\|\bm{\mu}^*+\max\{\bm{0},\sigma_{k} g(\bm{x}^{\prime})\}\big\|^{2}$. Substituting $\bm{x}^{\prime} = \widehat{\bm{x}}^{k}$ into \eqref{primobj-gap-left1} and using the estimate in \eqref{primfeas-vio} yields the desired bound the left-hand side of \eqref{primobj-gap}.
	
	We finally prove the right-hand side of inequality \eqref{primobj-gap}. For simplicity, let
	\begin{equation*}
		\Psi_{k}(\bm{x}) := \mathcal{L}_{\sigma_k}(\bm{x},\bm{\lambda}^k,\bm{\mu}^k) + \frac{\tau_{k}}{2\sigma_{k}}\|\bm{x}-\bm{x}^{k}\|_{S}^{2}.
	\end{equation*}
	Due to the symmetric positive definiteness of the preconditioner $S$, $\Psi_{k}$ is $\frac{\tau_{k}}{\sigma_{k}}$-strongly convex with respect to the weighted norm $\|\cdot\|_{S}$. Moreover, from \eqref{rip2ALM-inexcond}, we know that $\Delta^{k+1}\in\partial\Psi_k(\bm{x}^{k+1})$. Therefore, by the strong convexity of $\Psi_{k}$, we have
	\begin{equation*}
		\Psi_{k}(\bm{x}^{*}) \geq \Psi_{k}(\bm{x}^{k+1}) + \langle\Delta^{k+1} \; ,\bm{x}^{*}-\bm{x}^{k+1}\rangle+\frac{\tau_{k}}{2\sigma_{k}}\|\bm{x}^{*}-\bm{x}^{k+1}\|^2_{S}.
	\end{equation*}
	Expanding this inequality and recalling the definitions of $\mathcal{L}_{\sigma_k}$ and $\Psi_k$, together with the fact that $\bm{x}^*$ is feasible, we obtain that
	\begin{align}
		&\; f(\bm{x}^{*})
		+ \frac{1}{2\sigma_{k}}\big\|\max\{\bm{0},\bm{\mu}^{k}+\sigma_{k} g(\bm{x}^{*})\}\big\|^{2}-\frac{1}{2\sigma_{k}}\|\bm{\mu}^{k}\|^{2}
		+\frac{\tau_{k}}{2\sigma_{k}}\|\bm{x}^{*}-\bm{x}^{k}\|^2_{S}  \nonumber \\
		\geq & \; f(\bm{x}^{k+1}) + \langle\bm{\lambda}^{k},A\bm{x}^{k+1}-\bm{b}\rangle + \frac{\sigma_{k}}{2}\|A\bm{x}^{k+1}-\bm{b}\|^{2}+\frac{1}{2\sigma_{k}}\big\|\max\{\bm{0},\bm{\mu}^{k}+\sigma_{k} g(\bm{x}^{k+1})\}\big\|^{2} \nonumber \\
		& \; - \frac{1}{2\sigma_{k}}\|\bm{\mu}^{k}\|^{2}
		+ \langle\Delta^{k+1},\,\bm{x}^{*}-\bm{x}^{k+1}\rangle
		+ \frac{\tau_{k}}{2\sigma_{k}}\|\bm{x}^{k+1}-\bm{x}^{k}\|_{S}^{2}
		+ \frac{\tau_{k}}{2\sigma_{k}}\|\bm{x}^{*}-\bm{x}^{k+1}\|_{S}^{2} \nonumber \\
		\geq & \; f(\bm{x}^{k+1}) + \frac{1}{2\sigma_{k}}\|\bm{\lambda}^{k+1}\|^{2} - \frac{1}{2\sigma_{k}}\|\bm{\lambda}^{k}\|^{2} + \frac{1}{2\sigma_{k}}\|\bm{\mu}^{k+1}\|^{2} - \frac{1}{2\sigma_{k}}\|\bm{\mu}^{k}\|^{2} \nonumber \\
		& \; + \langle\Delta^{k+1},\,\bm{x}^{*}-\bm{x}^{k+1}\rangle + \frac{\tau_{k}}{2\sigma_{k}}\|\bm{x}^{*}-\bm{x}^{k+1}\|_{S}^{2}, \nonumber \\
		= & \; f(\bm{x}^{k+1}) + \frac{1}{2\sigma_{k}}\|\bm{y}^{k+1}\|^{2} - \frac{1}{2\sigma_{k}}\|\bm{y}^{k}\|^{2} + \langle\Delta^{k+1},\,\bm{x}^{*}-\bm{x}^{k+1}\rangle + \frac{\tau_{k}}{2\sigma_{k}}\|\bm{x}^{*}-\bm{x}^{k+1}\|_{S}^{2}, \label{pAL-1stcond}
	\end{align}
	where the second inequality follows from the updating rules of $\bm{\lambda}^{k+1}$ and $\bm{\mu}^{k+1}$ in \eqref{rip2ALM_ywupdate}. Then, recalling \eqref{gen-wwineq} and $\bm{p}^{k+1}:=\Delta^{k+1}-\tau_k\sigma_{k}^{-1}S(\bm{x}^{k+1}-\bm{x}^{k})$,  we have that
	\begin{equation}\label{gen-wwineq2}
		\begin{aligned}
			\|\bm{w}^k-\bm{x}^*\|^2
			= &\; \|\bm{w}^{k+1}-\bm{x}^*\|^2 + 2\langle\bm{w}^k-\bm{x}^{k+1},\,\sigma_{k}\Delta^{k+1}\rangle - \|\sigma_{k}\Delta^{k+1}\|^2 \\
			&\; + 2\sigma_{k}\langle\bm{x}^{k+1}-\bm{x}^*,\, \Delta^{k+1}\rangle.
		\end{aligned}
	\end{equation}
	Multiplying both sides of \eqref{gen-wwineq2} by $\frac{1}{2\sigma_{k}}$ and adding it to the both sides of \eqref{pAL-1stcond}, we have that
	\begin{align}
		&\; f(\bm{x}^{*})
		+ \frac{1}{2\sigma_{k}}\big\|\max\{\bm{0},\bm{\mu}^{k}+\sigma_{k} g(\bm{x}^{*})\}\big\|^{2}-\frac{1}{2\sigma_{k}}\|\bm{\mu}^{k}\|^{2}
		+\frac{\tau_{k}}{2\sigma_{k}}\|\bm{x}^{*}-\bm{x}^{k}\|^2_{S} + \frac{1}{2\sigma_{k}}\|\bm{w}^{k}-\bm{x}^*\|^2 \nonumber \\[2mm]
		\geq & \; f(\bm{x}^{k+1}) + \frac{1}{2\sigma_{k}}\|\bm{y}^{k+1}\|^{2} - \frac{1}{2\sigma_{k}}\|\bm{y}^{k}\|^{2}
		+ \langle\Delta^{k+1},\,\bm{x}^{*}-\bm{x}^{k+1}\rangle
		+ \frac{\tau_{k}}{2\sigma_{k}}\|\bm{x}^{*}-\bm{x}^{k+1}\|_{S}^{2} \nonumber \\
		& \; + \frac{1}{2\sigma_{k}}\left(\|\bm{w}^{k+1}-\bm{x}^*\|^2 + 2\langle\bm{w}^k-\bm{x}^{k+1},\,\sigma_{k}\Delta^{k+1}\rangle - \|\sigma_{k}\Delta^{k+1}\|^2 \right) + \langle\bm{x}^{k+1}-\bm{x}^*,\, \Delta^{k+1}\rangle \nonumber \\[2mm]
		= & \; f(\bm{x}^{k+1}) + \frac{1}{2\sigma_{k}}\|\bm{y}^{k+1}\|^{2} - \frac{1}{2\sigma_{k}}\|\bm{y}^{k}\|^{2} + \frac{\tau_{k}}{2\sigma_{k}}\|\bm{x}^{*}-\bm{x}^{k+1}\|_{S}^{2} + \frac{1}{2\sigma_{k}}\|\bm{w}^{k+1}-\bm{x}^*\|^2 \nonumber \\
		&
		\; + \frac{1}{2\sigma_{k}}\left( 2\langle\bm{w}^k-\bm{x}^{k+1},\,\sigma_{k}\Delta^{k+1}\rangle - \|\sigma_{k}\Delta^{k+1}\|^2 \right), \label{pAL-1stcond2}
	\end{align}

	\noindent Moreover, since $g(\bm{x}^*)\leq0$, we have $\big\|\max\{\bm{0},\bm{\mu}^{k}+\sigma_{k} g(\bm{x}^{*})\}\big\|^{2}\leq\|\bm{\mu}^{k}\|^{2}$. This, together with \eqref{pAL-1stcond2}, implies that
	\begin{equation*}
		\begin{aligned}
			&\quad \sigma_k\big(f(\bm{x}^{k+1}) - f(\bm{x}^*)\big)  \\
			&\leq \frac{\tau_k}{2}\|\bm{x}^{*}-\bm{x}^{k}\|^2_{S}
			- \frac{\tau_k}{2}\|\bm{x}^{*}-\bm{x}^{k+1}\|^2_{S}
			+ \frac{1}{2}\|\bm{y}^{k}\|^{2} - \frac{1}{2}\|\bm{y}^{k+1}\|^{2}
			\\
			& \qquad + \frac{1}{2}\|\bm{w}^{k} - \bm{x}^{*}\|^{2} - \frac{1}{2}\|\bm{w}^{k+1} - \bm{x}^{*}\|^{2} + \frac{1}{2}\left( - 2\langle\bm{w}^k-\bm{x}^{k+1},\,\sigma_{k}\Delta^{k+1}\rangle + \|\sigma_{k}\Delta^{k+1}\|^2 \right) \\
			&\leq \frac{\tau_k}{2}\|\bm{x}^{*}-\bm{x}^{k}\|^2_{S}
			- \frac{\tau_{k+1}}{2(1+\nu_k)}\|\bm{x}^{*}-\bm{x}^{k+1}\|^2_{S}
			+ \frac{1}{2}\|\bm{y}^{k}\|^{2} - \frac{1}{2}\|\bm{y}^{k+1}\|^{2}
			\\
			& \qquad + \frac{1}{2}\|\bm{w}^{k} - \bm{x}^{*}\|^{2} - \frac{1}{2}\|\bm{w}^{k+1} - \bm{x}^{*}\|^{2} + \frac{1}{2}\left( 2\big|\langle\bm{w}^k-\bm{x}^{k+1},\,\sigma_{k}\Delta^{k+1}\rangle\big| + \|\sigma_{k}\Delta^{k+1}\|^2 \right) \\
			&\leq \frac{\tau_k}{2}\|\bm{x}^{*}-\bm{x}^{k}\|^2_{S}
			- \frac{\tau_{k+1}}{2}\|\bm{x}^{*}-\bm{x}^{k+1}\|^2_{S} + \frac{\nu_k\tau_{k+1}}{2(1+\nu_k)}\|\bm{x}^{*}-\bm{x}^{k+1}\|^2_{S}
			+ \frac{1}{2}\|\bm{y}^{k}\|^{2} - \frac{1}{2}\|\bm{y}^{k+1}\|^{2}  \\
			&\qquad + \frac{1}{2}\|\bm{w}^{k} - \bm{x}^{*}\|^{2} - \frac{1}{2}\|\bm{w}^{k+1} - \bm{x}^{*}\|^{2} + \frac{\rho}{2}\left( \big\|\bm{y}^{k+1}-\bm{y}^{k}\big\|^2
			+ \tau_k\big\|\bm{x}^{k+1}-\bm{x}^{k}\big\|^{2}_{S} \right)  \\
			&\leq \frac{\tau_k}{2}\|\bm{x}^{*}-\bm{x}^{k}\|^2_{S}
			- \frac{\tau_{k+1}}{2}\|\bm{x}^{*}-\bm{x}^{k+1}\|^2_{S}
			+ \lambda_{\max}(S)(\|\bm{x}^*\|^2+B_x^2)\tau_{\max}\nu_k + \frac{1}{2}\|\bm{y}^{k}\|^{2}-\frac{1}{2}\|\bm{y}^{k+1}\|^{2} \\
			&\qquad
			+ \frac{1}{2}\|\bm{w}^{k} - \bm{x}^{*}\|^{2} - \frac{1}{2}\|\bm{w}^{k+1} - \bm{x}^{*}\|^{2} + \frac{\rho}{2}\left( \big\|\bm{y}^{k+1}-\bm{y}^{k}\big\|^2
			+ \tau_k\big\|\bm{x}^{k+1}-\bm{x}^{k}\big\|^{2}_{S} \right),
		\end{aligned}
	\end{equation*}
	where the second inequality follows from $\tau_{k+1}\leq(1+\nu_{k})\tau_{k}$, the third inequality follows from the error criterion \eqref{gen-optcond2}, and the last inequality follows from $\nu_k\geq0$, $\tau_k\leq\tau_{\max}$, and $\|\bm{x}^{*}-\bm{x}^{k+1}\|^2_{S}\leq2(\|\bm{x}^*\|^2_{S}+\|\bm{x}^{k+1}\|^2_{S})
	\leq2\lambda_{\max}(S)(\|\bm{x}^*\|^2+B_x^2)$ with $B_x$ being the upper bound of the sequence $\{\bm{x}^k\}$. Then, from the above inequality, we obtain that
	\begin{equation*}
		\begin{aligned}
			&\quad \sum_{i=0}^{k-1}\sigma_{i}\big(f(\bm{x}^{i+1}) - f(\bm{x}^{*})\big) \\
			&\leq \; \sum_{i=0}^{k-1}\left(\frac{\tau_i}{2}\|\bm{x}^{*}-\bm{x}^{i}\|^2_{S}
			- \frac{\tau_{i+1}}{2}\|\bm{x}^{*}-\bm{x}^{i+1}\|^2_{S}\right)
			+ \sum_{i=0}^{i-1}\left(\frac{1}{2}\|\bm{y}^{i}\|^{2}
			-\frac{1}{2}\|\bm{y}^{i+1}\|^{2}\right)   \\
			&\qquad
			+ \sum_{i=0}^{k-1}\left(\frac{1}{2}\|\bm{w}^{i} - \bm{x}^{*}\|^{2} - \frac{1}{2}\|\bm{w}^{i+1} - \bm{x}^{*}\|^{2}\right) + \lambda_{\max}(S)(\|\bm{x}^*\|^2+B_x^2)\tau_{\max}\sum_{i=0}^{k-1}\nu_i \\
			&\qquad + \frac{\rho}{2}\sum_{i=0}^{k-1}\left( \big\|\bm{y}^{i+1}-\bm{y}^{i}\big\|^2
			+ \tau_{i}\big\|\bm{x}^{i+1}-\bm{x}^{i}\big\|^{2}_{S} \right)  \\
			&\leq \; \frac{\tau_{0}}{2}\|\bm{x}^{*}-\bm{x}^{0}\|_{S}^{2}
			+ \frac{1}{2}\|\bm{y}^{0}\|^{2} + \frac{1}{2}\|\bm{w}^{0} - \bm{x}^{*}\|^{2}
			+ \lambda_{\max}(S)(\|\bm{x}^*\|^2+B_x^2)\tau_{\max}\sum_{i=0}^{\infty}\nu_i \\
			&\qquad + \frac{\rho}{2}\sum_{i=0}^{\infty}\left( \big\|\bm{y}^{i+1}-\bm{y}^{i}\big\|^2
			+ \tau_{i}\big\|\bm{x}^{i+1}-\bm{x}^{i}\big\|^{2}_{S} \right).
		\end{aligned}
	\end{equation*}
	Moreover, it follows from the convexity of $f$ that
	\begin{equation*}
		f(\widehat{\bm{x}}^{k}) - f(\bm{x}^{*})
		= f\left(\frac{\sum_{i=0}^{k-1}\sigma_{i}\bm{x}^{i+1}}{\sum_{i=0}^{k-1}\sigma_{i}}\right) - f(\bm{x}^{*})
		\leq \cfrac{\sum_{i=0}^{k-1}\sigma_{i}\big(f(\bm{x}^{i+1}) - f(\bm{x}^{*})\big)}{\sum_{i=0}^{k-1}\sigma_{i}}.
	\end{equation*}
	Combining the above two results yields the desired bound the right-hand side of \eqref{primobj-gap}.
\end{proof}

\bibliographystyle{plain}
\bibliography{Ref_ripALM_general}

\end{document}